\newcommand{\norm}[1]{\ensuremath{\left\| #1 \right\|}}
\newcommand{\bracket}[1]{\ensuremath{\left[ #1 \right]}}
\newcommand{\braces}[1]{\ensuremath{\left\{ #1 \right\}}}
\newcommand{\parenth}[1]{\ensuremath{\left( #1 \right)}}
\newcommand{\refeqn}[1]{(\ref{eqn:#1})}
\newcommand{\reffig}[1]{Figure \ref{fig:#1}}
\newcommand{\tr}[1]{\mbox{tr}\ensuremath{\negthickspace\bracket{#1}}}
\newcommand{\trs}[1]{\mathrm{tr}\ensuremath{[#1]}}
\newcommand{\SO}{\ensuremath{\mathsf{SO(3)}}}
\newcommand{\T}{\ensuremath{\mathsf{T}}}
\renewcommand{\L}{\ensuremath{\mathsf{L}}}
\newcommand{\so}{\ensuremath{\mathfrak{so}(3)}}
\newcommand{\SE}{\ensuremath{\mathsf{SE(3)}}}
\renewcommand{\Re}{\ensuremath{\mathbb{R}}}
\newcommand{\Sph}{\ensuremath{\mathsf{S}}}
\newcommand{\D}{\ensuremath{\mathbf{D}}}
\title{\LARGE \bf
Nonlinear Robust Tracking Control of a Quadrotor UAV on \SE}
\author{Taeyoung Lee\authorrefmark{1}, Melvin Leok\authorrefmark{2}, and N. Harris McClamroch%
\thanks{Taeyoung Lee, Mechanical and Aerospace Engineering, The George Washington University, Washington DC 20052 {\tt tylee@gwu.edu}}%
\thanks{Melvin Leok, Mathematics, University of California at San Diego, La Jolla, CA 92093 {\tt mleok@math.ucsd.edu}}%
\thanks{N. Harris McClamroch, Aerospace Engineering, University of Michigan, Ann Arbor, MI 48109 {\tt
nhm@umich.edu}}%
\thanks{\textsuperscript{\footnotesize\ensuremath{*}}This research has been supported in part by NSF under grants CMMI-1029551.}
\thanks{\textsuperscript{\footnotesize\ensuremath{\dagger}}This research has been supported in part by NSF under grants DMS-0726263, DMS-1001521, DMS-1010687, and CMMI-1029445.}
}
\newcommand{\EditTL}[1]{{\color{red}\protect #1}}
\renewcommand{\EditTL}[1]{{\protect #1}}
\newtheorem{prop}{Proposition}
\begin{document}
\allowdisplaybreaks
\maketitle \thispagestyle{empty} \pagestyle{empty}

\begin{abstract}
This paper provides nonlinear tracking control systems for a quadrotor unmanned aerial vehicle (UAV) that are robust to bounded uncertainties. A mathematical model of a quadrotor UAV is defined on the special Euclidean group, and nonlinear output-tracking controllers are developed to follow (1) an attitude command, and (2) a position command for the vehicle center of mass. The controlled system has the desirable properties that the tracking errors are uniformly ultimately bounded, and the size of the ultimate bound can be arbitrarily reduced by control system parameters. Numerical examples illustrating complex maneuvers are provided.
\end{abstract}

\section{INTRODUCTION}

A quadrotor unmanned aerial vehicle (UAV) consists of two pairs of counter-rotating rotors and propellers. It has been envisaged for various applications such as surveillance or mobile sensor networks as well as for educational purposes, and several control systems have been studied.

Linear control systems have been widely used to enhance the stability properties of an equilibrium of a quadrotor UAV~\cite{ValBetPAGNCC06,HofHuaAGNCC07,CasLozICSM05}. In~\cite{GilHofIJRR11}, the quadrotor dynamics is modeled as a collection of simplified hybrid dynamic modes, and  reachability sets are analyzed to guarantees the safety and performance for larger area of operating conditions.

Several nonlinear controllers have been developed as well. Backstepping and sliding mode techniques are applied in~\cite{BouSiePIICRA05,EfePMCCA07}, and a nonlinear $H_\infty$ controller is studied in~\cite{RafOrtA10}. An adaptive neural network based control system is developed in~\cite{NicMacPCCECE08}. Since all of these controllers are based on Euler angles, they exhibit singularities when representing complex rotational maneuvers of a quadrotor UAV, thereby significantly restricting their ability to achieve complex flight maneuvers. 

An attitude control system based on quaternions is applied to a quadrotor UAV~\cite{TayMcGITCSTI06}. Quaternions do not have singularities, but they have ambiguities in representing an attitude, as the three-sphere $\Sph^3$ double-covers $\SO$. As a result, in a quaternion-based attitude control system, convergence to a single attitude implies convergence to either of the two disconnected, antipodal points on $\Sph^3$~\cite{MaySanITAC11}. Therefore, depending on the particular choice of control inputs, a quaternion-based control system may become discontinuous when applied to actual attitude dynamics~\cite{MaySanPACC11b}, and it may also exhibit unwinding behavior, where the controller rotates a rigid body through unnecessarily large angles~\cite{BhaBerSCL00,MaySanPACC11}.

Attitude control systems also have been developed directly on the special orthogonal group, $\SO$ to avoid the singularities associated with Euler-angles and the ambiguity of quaternions~\cite{BulLew05,MaiBerITAC06,CabCunPICDC08,ChaSanICSM11}. By following this geometric approach, the dynamics of a quadrotor UAV is globally expressed on the special Euclidean group, $\SE$, and nonlinear control systems are developed to track outputs of several flight modes, namely an attitude controlled flight mode, a position controlled flight mode, and a velocity controlled flight mode~\cite{LeeLeo}. Several aggressive maneuvers of a quadrotor UAV are also demonstrated based on a hybrid control architecture. This is particularly desirable since complicated reachability set analysis is not required to guarantee a safe switching between different flight modes, as the region of attraction for each flight mode covers the configuration space almost globally.

In this paper, we extend the results of \cite{LeeLeo} to construct nonlinear robust tracking control systems on $\SE$ for a quadrotor UAV. We assume that there exist unstructured, bounded uncertainties, with pre-determined bounds, on the translational dynamics and the rotation dynamics of a quadrotor UAV. Output tracking control systems are developed to follow an attitude command or a position command for the vehicle center of mass. We show that the tracking errors are uniformly ultimately bounded, and the size of the ultimate bound can be arbitrarily reduced. The robustness of the proposed tracking control systems are critical in generating complex maneuvers, as the impact of the several aerodynamic effects resulting from the variation in air speed is significant even at moderate velocities~\cite{HofHuaAGNCC07}.

The paper is organized as follows. We develop a globally defined model for the translational and rotational dynamics of a quadrotor UAV in Section \ref{sec:QDM}. A hybrid control architecture is introduced and a robust attitude tracking control system is developed in Section \ref{sec:ACFM}. Section \ref{sec:PCFM} present results for a robust position tracking, followed by numerical examples in Section \ref{sec:NE}.

\section{QUADROTOR DYNAMICS MODEL}\label{sec:QDM}

Consider a quadrotor UAV model illustrated in \reffig{QM}. This is a system of four identical rotors and propellers located at the vertices of a square, which generate a thrust and torque normal to the plane of this square. We choose an inertial reference frame $\{\vec e_1,\vec e_2,\vec e_3\}$ and a body-fixed frame $\{\vec b_1,\vec b_2,\vec b_3\}$. The origin of the body-fixed frame is located at the center of mass of this vehicle. The first and the second axes of the body-fixed frame, $\vec b_1,\vec b_2$, lie in the plane defined by the centers of the four rotors, as illustrated in \reffig{QM}. The third body-fixed axis $\vec b_3$ is normal to this plane.  Each of the inertial reference frame and the body-fixed reference frame consist of a triad of orthogonal vectors defined according to the right hand rule.    Define
\begin{center}
\begin{tabular}{lp{5.6cm}}
{$m\in\Re$} & the total mass\\
{$J\in\Re^{3\times 3}$} & the inertia matrix with respect to the body-fixed frame\\
{$R\in\SO$} & the rotation matrix from the body-fixed frame to the inertial  frame\\
{$\Omega\in\Re^3$} & the angular velocity in the body-fixed frame\\
{$x\in\Re^3$} & the position vector of the center of mass in the inertial frame\\
{$v\in\Re^3$} & the velocity vector of the center of mass in the inertial frame\\
{$d\in\Re$} & the distance from the center of mass to the center of each rotor in the $\vec b_1,\vec b_2$ plane\\
{$f_i\in\Re$} & the thrust generated by the $i$-th propeller along the $-\vec b_3$ axis\\
{$\tau_i\in\Re$} & the torque generated by the $i$-th propeller about the $\vec b_3$ axis\\
{$f\in\Re$} & the total thrust magnitude, i.e., $f=\sum_{i=1}^4 f_i$\\
{$M\in\Re^3$} & the total moment vector in the body-fixed frame\\
\end{tabular}
\end{center}

The configuration of this quadrotor UAV is defined by the location of the center of mass and the attitude with respect to the inertial frame. Therefore, the configuration manifold is the special Euclidean group $\SE$, which is the semidirect product of $\Re^3$ and the special orthogonal group $\SO=\{R\in\Re^{3\times 3}\,|\, R^TR=I,\, \det{R}=1\}$. 

\begin{figure}
\setlength{\unitlength}{0.65\columnwidth}\footnotesize
\centerline{
\begin{picture}(1,0.8)(0,0)
\put(0,0){\includegraphics[width=0.65\columnwidth]{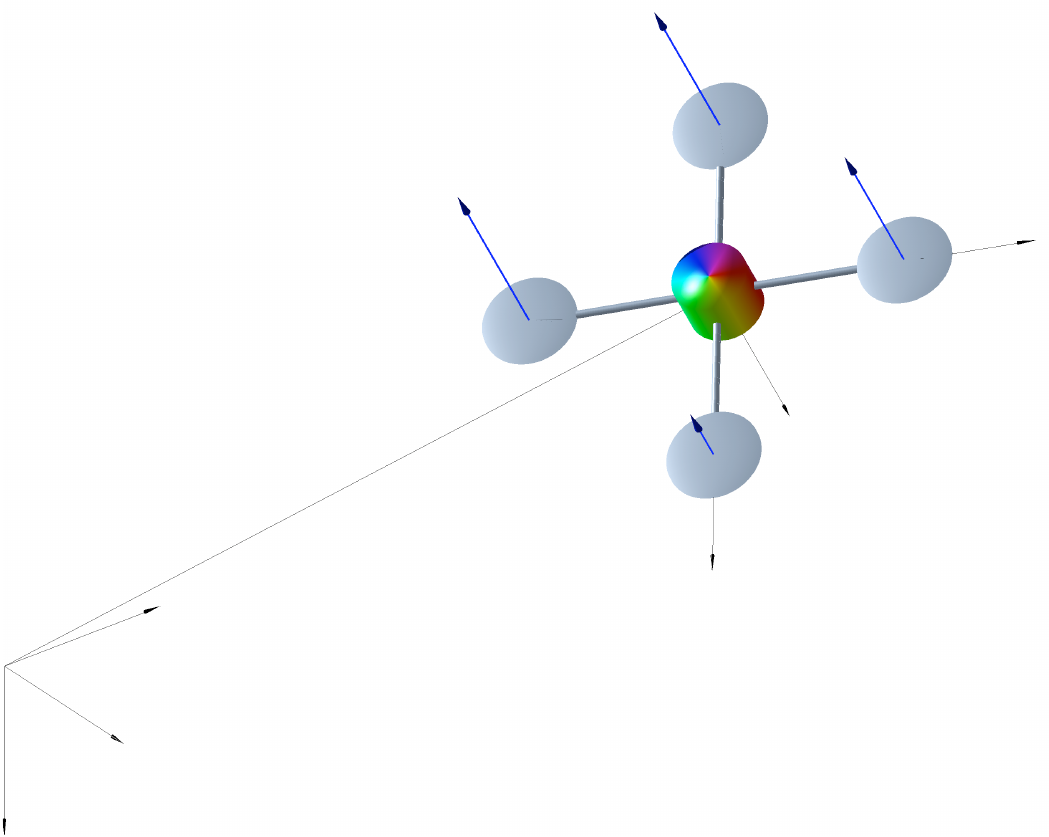}}
\put(0.16,0.18){\shortstack[c]{$\vec e_1$}}
\put(0.13,0.06){\shortstack[c]{$\vec e_2$}}
\put(0.02,0.0){\shortstack[c]{$\vec e_3$}}
\put(0.98,0.5){\shortstack[c]{$\vec b_1$}}
\put(0.70,0.22){\shortstack[c]{$\vec b_2$}}
\put(0.76,0.37){\shortstack[c]{$\vec b_3$}}
\put(0.78,0.66){\shortstack[c]{$f_1$}}
\put(0.56,0.76){\shortstack[c]{$f_2$}}
\put(0.40,0.63){\shortstack[c]{$f_3$}}
\put(0.61,0.42){\shortstack[c]{$f_4$}}
\put(0.30,0.35){\shortstack[c]{$x$}}
\put(0.90,0.35){\shortstack[c]{$R$}}
\end{picture}}
\caption{Quadrotor model}\label{fig:QM}
\end{figure}

The following conventions are assumed for the rotors and propellers, and the thrust and moment that they exert on the quadrotor UAV. We assume that the thrust of each propeller is directly controlled, and the direction of the thrust of each propeller is normal to the quadrotor plane. The first and third propellers are assumed to generate a thrust along the direction of $-\vec b_3$ when rotating clockwise; the second and fourth propellers are assumed to generate a thrust along the same direction of $-\vec b_3$ when rotating counterclockwise. Thus, the thrust magnitude is $f=\sum_{i=1}^4 f_i$, and it is positive when the total thrust vector acts  along $-\vec b_3$, and it is negative when the total thrust vector acts along $\vec b_3$. By the definition of the rotation matrix $R\in\SO$, 
the total thrust vector is given by $-fRe_3\in\Re^3$ in the inertial frame.
We also assume that the torque generated by each propeller is directly proportional to its thrust. Since it is assumed that the first and the third propellers rotate clockwise and the second and the fourth propellers rotate counterclockwise to generate a positive thrust along the direction of $-\vec b_3$, %
the torque generated by the $i$-th propeller about $\vec b_3$ can be written as $\tau_i=(-1)^{i} c_{\tau f} f_i$  for a fixed constant $c_{\tau f}$.    All of these assumptions are common~\cite{CasLozICSM05,TayMcGITCSTI06}.  

Under these assumptions, the moment vector in the body-fixed frame is given by 
\begin{align}
\begin{bmatrix} f \\ M_1 \\ M_2 \\ M_3 $ $\end{bmatrix}
=\begin{bmatrix} 1 & 1& 1& 1\\
0 & -d & 0 & d\\
d & 0 & -d & 0\\
-c_{\tau f} & c_{\tau f} & -c_{\tau f} &c_{\tau f}
\end{bmatrix}
\begin{bmatrix} f_1 \\ f_2\\ f_3 \\ f_4\end{bmatrix}\label{eqn:fM}.
\end{align}
The determinant of the above $4\times4$ matrix is $8 c_{\tau f} d^2$, so it is invertible when $d\neq 0$ and $c_{\tau f}\neq 0$. Therefore, for given thrust magnitude $f$ and given moment vector $M$, the thrust of each propeller $f_1, f_2, f_3, f_4$ can be obtained from \refeqn{fM}. Using this equation, the thrust magnitude $f\in\Re$ and the moment vector $M\in\Re^3$ are viewed as control inputs in this paper.

The equations of motion of the quadrotor UAV can be written as
\begin{gather}
\dot x  = v,\label{eqn:EL1}\\
m \dot v = mge_3 - f R e_3 + \Delta_x,\label{eqn:EL2}\\
\dot R = R\hat\Omega,\label{eqn:EL3}\\
J\dot \Omega + \Omega\times J\Omega = M + \Delta_R,\label{eqn:EL4}
\end{gather}
where the \textit{hat map} $\hat\cdot:\Re^3\rightarrow\so$ is defined by the condition that $\hat x y=x\times y$ for all $x,y\in\Re^3$ (see Appendix \ref{app:hat}). The inverse of the hat map is denoted by the \textit{vee} map, $\vee:\so\rightarrow\Re^3$. Unstructured uncertainties in the translational dynamics and the rotational dynamics of a quadrotor UAV are denoted by $\Delta_x$ and $\Delta_R\in\Re^3$, respectively. We assume that uncertainties are bounded:
\begin{align}
\|\Delta_x\|\leq \delta_x, \quad \|\Delta_R\|\leq \delta_R
\end{align}
for known, positive constants $\delta_x$, $\delta_R\in\Re$.

Throughout this paper, $\lambda_m(\cdot)$ and $\lambda_{M}(\cdot)$ denote the minimum eignevalue and the maximum eigenvalue of a matrix, respectively.

\section{ATTITUDE CONTROLLED FLIGHT MODE}\label{sec:ACFM}

\subsection{Flight Modes}

Since the quadrotor UAV has four inputs, it is possible to achieve asymptotic output tracking for at most four quadrotor UAV outputs.    The quadrotor UAV has three translational and three rotational degrees of freedom; it is not possible to achieve asymptotic output tracking of both attitude and position of the quadrotor UAV.    This motivates us to introduce several flight modes, namely (1) an attitude controlled flight mode, and (2) a position controlled flight mode. 

A complex flight maneuver can be defined by specifying a concatenation of flight modes together with conditions for switching between them; for each flight mode one also specifies the desired or commanded outputs as functions of time. Unlike a hybrid flight control system that requires reachability analyses~\cite{GilHofIJRR11}, the proposed control system is robust to switching conditions since each flight mode has almost global stability properties, and it is straightforward to design a complex maneuver of a quadrotor UAV. 

In this section, an attitude controlled flight mode is considered, where the outputs are the attitude of the quadrotor UAV and the controller for this flight mode achieves asymptotic attitude tracking.

\subsection{Attitude Tracking Errors}

Suppose that an arbitrary smooth attitude command $R_d(t)\in\SO$ is given. The corresponding angular velocity command is obtained by the attitude kinematics equation, $\hat\Omega_d = R_d^T \dot R_d$. 

We first define errors associated with the attitude dynamics of the quadrotor UAV. The attitude error function studied in~\cite{BulLew05,ChaMcCITAC09,Lee11}, and several properties are summarized as follows.

\begin{prop}\label{prop:1}
For a given tracking command $(R_d,\Omega_d)$, and the current attitude and angular velocity $(R,\Omega)$, we define an attitude error function $\Psi:\SO\times\SO\rightarrow\Re$, an attitude error vector $e_R\in\Re^3$, and an angular velocity error vector $e_\Omega\in \Re^3$ as follows:
\begin{gather}
\Psi (R,R_d) = \frac{1}{2}\tr{I-R_d^TR},\\
e_R =\frac{1}{2} (R_d^TR-R^TR_d)^\vee,\\
e_\Omega = \Omega - R^T R_d\Omega_d,
\end{gather}
Then, the following statements hold:
\begin{itemize}
\item[(i)] $\Psi$ is locally positive-definite about $R=R_d$.
\item[(ii)] the left-trivialized derivative of $\Psi$ is given by
\begin{align}
\T^*_I \L_R\, (\D_R\Psi(R,R_d))= e_R.
\end{align}
\item[(iii)] the critical points of $\Psi$, where $e_R=0$, are $\{R_d\}\cup\{R_d\exp (\pi \hat s),\,s\in\Sph^2 \}$.
\item[(iv)] a lower bound of $\Psi$ is given as follows:
\begin{align}
\frac{1}{2}\|e_R(R,R_d)\|^2 \leq \Psi(R,R_d),\label{eqn:PsiLB}
\end{align}
\item[(v)] Let $\psi$ be a positive constant that is strictly less than $2$. If $\Psi(R,R_d)< \psi<2$, then an upper bound of $\Psi$ is given by
\begin{align}
\Psi(R,R_d)\leq \frac{1}{2-\psi} \|e_R(R,R_d)\|^2.\label{eqn:PsiUB}
\end{align}
\end{itemize}
\end{prop}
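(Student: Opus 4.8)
The plan is to reduce every claim to the relative rotation $Q = R_d^T R \in \SO$ and its rotation angle. Since $Q$ is a rotation, write $Q = \exp(\theta\hat s)$ for a unit axis $s\in\Sph^2$ and angle $\theta\in[0,\pi]$. The two facts I will exploit are $\tr{Q} = 1 + 2\cos\theta$ and that the antisymmetric part of $Q$ is $\tfrac{1}{2}(Q-Q^T) = \sin\theta\,\hat s$ (read off from $Q = I + \sin\theta\,\hat s + (1-\cos\theta)\hat s^2$, whose $\hat s^2$ term is symmetric). These give the two scalar identities $\Psi = \tfrac{1}{2}(3 - \tr{Q}) = 1 - \cos\theta$ and $e_R = \tfrac{1}{2}(Q - Q^T)^\vee = \sin\theta\, s$, so that $\norm{e_R} = \sin\theta$ on $[0,\pi]$. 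With these in hand, parts (i), (iv), and (v) become elementary.

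For (i), $\Psi = 1 - \cos\theta \geq 0$, with equality iff $\theta = 0$, i.e.\ $Q = I$, i.e.\ $R = R_d$, which yields positive-definiteness near $R_d$. For (ii), I would take the left-invariant variation $R_\epsilon = R\exp(\epsilon\hat\eta)$, $\eta\in\Re^3$, and differentiate to get $\frac{d}{d\epsilon}\big|_{\epsilon=0}\Psi(R_\epsilon, R_d) = -\tfrac{1}{2}\tr{R_d^T R\,\hat\eta}$. The step that carries the argument is the trace identity $\tr{A\hat\eta} = -\eta^T(A - A^T)^\vee$, valid for every $A\in\Re^{3\times3}$ (checked on $A=\hat a$, where both sides equal $-2a^T\eta$); applying it with $A = R_d^T R$ produces $\eta^T e_R$, which identifies the left-trivialized derivative $\T^*_I\L_R(\D_R\Psi)$ with $e_R$.

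For (iii), $e_R = 0$ is equivalent to $Q = Q^T$, i.e.\ $Q$ symmetric and special orthogonal; such a matrix has eigenvalues in $\{\pm1\}$ with $\det{Q}=1$, so either $Q = I$ or $Q$ has eigenvalues $\{1,-1,-1\}$, the latter being precisely a rotation by $\pi$, $Q = \exp(\pi\hat s)$. This returns $R = R_d$ and $R = R_d\exp(\pi\hat s)$. For (iv) and (v) I would use $\norm{e_R}^2 = \sin^2\theta = (1-\cos\theta)(1+\cos\theta) = \Psi\,(1+\cos\theta)$. Since $1+\cos\theta \leq 2$, this gives $\tfrac{1}{2}\norm{e_R}^2 = \tfrac{1}{2}(1+\cos\theta)\,\Psi \leq \Psi$, proving \refeqn{PsiLB}. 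For (v), the hypothesis $\Psi < \psi$ reads $\cos\theta > 1-\psi$, so $1+\cos\theta > 2-\psi$, and substituting into $\norm{e_R}^2 = \Psi\,(1+\cos\theta)$ gives $\norm{e_R}^2 > (2-\psi)\,\Psi$, which rearranges to \refeqn{PsiUB}.

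The main obstacle is (ii): getting the sign and exact form of the trace--vee identity right and matching the chosen variation to the left-trivialization convention $\T^*_I\L_R$. Everything else is a short computation once the reduction to $Q$ and the angle $\theta$ is in place; the only other point requiring care is justifying the eigenstructure of a symmetric element of $\SO$ used in (iii).
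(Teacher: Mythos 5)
Your argument is correct, and it is worth noting that the paper itself offers no proof of Proposition \ref{prop:1} --- it simply defers to the reference [Lee11] --- so your write-up is a genuine, self-contained substitute rather than a variant of an argument in the text. The axis--angle reduction $Q=R_d^TR=\exp(\theta\hat s)$ is the standard route and everything checks out: $\trs{Q}=1+2\cos\theta$ and the antisymmetric part $\sin\theta\,\hat s$ give $\Psi=1-\cos\theta$ and $\norm{e_R}=\sin\theta$, from which (i), (iii), (iv), (v) follow by the identity $\norm{e_R}^2=\Psi(2-\Psi)$, and (ii) follows from the trace--vee identity, which is exactly \refeqn{hat1} in the paper's appendix, so you may cite it rather than re-derive it. Two small points to tighten. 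First, your verification of $\tr{A\hat\eta}=-\eta^T(A-A^T)^\vee$ only on antisymmetric $A=\hat a$ is not a proof for general $A$; you should add that the symmetric part of $A$ contributes zero to both sides (the trace of a symmetric times an antisymmetric matrix vanishes, and $(S-S^T)^\vee=0$), after which the antisymmetric case you checked covers everything. Second, in (v) the step $\norm{e_R}^2>(2-\psi)\Psi$ is an equality, not a strict inequality, when $\Psi=0$; this is harmless because the target bound \refeqn{PsiUB} is non-strict, but the inequality should be written as $\geq$. With those repairs the proof is complete, and it has the advantage over the paper of making explicit the clean identity $\norm{e_R}^2=\Psi(2-\Psi)$ from which both \refeqn{PsiLB} and \refeqn{PsiUB} drop out immediately.
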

\begin{proof} See \cite{Lee11}. \end{proof}

\subsection{Attitude Tracking Controller}

We now introduce a nonlinear controller for the attitude controlled flight mode, described by an expression for the moment vector:
\begin{align}
M & = -k_R e_R -k_\Omega e_\Omega +\Omega\times J\Omega\nonumber\\
&\qquad -J(\hat\Omega R^T R_d \Omega_d - R^T R_d\dot\Omega_d)+\mu_R,\label{eqn:aM}\\
\mu_R & = -\frac{\delta_R^2 e_A}{\delta_R \|e_A\|+\epsilon_R},\label{eqn:muR}\\
e_A & = e_\Omega + c_2 J^{-1} e_R,\label{eqn:eA}
\end{align}
where $k_R,k_\Omega,c_2,\epsilon_R$ are positive constants.

In this attitude controlled mode, it is possible to ignore the translational motion of the quadrotor UAV; consequently the reduced model for the attitude dynamics are given by equations \refeqn{EL3}, \refeqn{EL4}, using the controller expression \refeqn{aM}-\refeqn{eA}.  We now state the result that the tracking errors $(e_R, e_\Omega)$ are uniformly ultimately bounded.

\begin{prop}{(Robustness of Attitude Controlled Flight Mode)}\label{prop:Att}
Suppose that the initial attitude error satisfies
\begin{gather}
\Psi(R(0),R_d(0))<\psi_2<2\label{eqn:eRb0}
\end{gather}
for a constant $\psi_2$. Consider the control moment $M$ defined in \refeqn{aM}-\refeqn{eA}. For positive constants $k_R,k_\Omega$, the constants $c_2,\epsilon_R$ are chosen such that
\begin{gather}
c_2 < \min\bigg\{  k_\Omega,\frac{4k_\Omega k_R\lambda_{m}(J)^2}{k_\Omega^2\lambda_{M}(J)+4k_R\lambda_{m}(J)^2},\sqrt{k_R\lambda_{m}(J)}\bigg\},\label{eqn:c2}\\
\epsilon_R < \frac{\lambda_m(M_{21})\lambda_m(W_2)}{\lambda_M(M_{22})}\psi_2(2-\psi_2),\label{eqn:epsilonR}
\end{gather}
where the matrices $M_{21},M_{22},W_2\in\Re^{2\times2}$ are given by
\begin{gather*}
M_{21} = \frac{1}{2}\begin{bmatrix} k_R & -c_2 \\ -c_2 & \lambda_{m}(J)  \end{bmatrix},\,
M_{22} = \frac{1}{2}\begin{bmatrix} \frac{2k_R}{2-\psi_2} & c_2 \\ c_2 & \lambda_{M}(J)\end{bmatrix},
\\
W_2 = \begin{bmatrix} \frac{c_2k_R}{\lambda_{M}(J)} & -\frac{c_2k_\Omega}{2\lambda_{m}(J)} \\ 
-\frac{c_2k_\Omega}{2\lambda_{m}(J)} & k_\Omega-c_2 \end{bmatrix}.
\end{gather*}

Then, the attitude tracking errors $(e_R,e_\Omega)$ are uniformly ultimately bounded, and the ultimate bound is given by
\begin{align}
\braces{\|e_R\|^2 + \|e_\Omega\|^2 \leq \frac{\lambda_M(M_{22})}{\lambda_m(M_{21})\lambda_m(W_2)}\epsilon_R}.\label{eqn:uubA}
\end{align}
\end{prop}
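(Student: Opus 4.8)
The plan is to build a composite Lyapunov function, reduce its rate to a negative-definite quadratic form in the error magnitudes plus a residual controlled by $\epsilon_R$, and then run a standard uniform ultimate boundedness argument while carefully enforcing the region of validity. First I would derive the error dynamics. Differentiating $e_\Omega=\Omega-R^TR_d\Omega_d$ along \refeqn{EL3}--\refeqn{EL4} and substituting the control law \refeqn{aM}, the Coriolis and feedforward terms cancel exactly, leaving the clean form $J\dot e_\Omega = -k_R e_R - k_\Omega e_\Omega + \mu_R + \Delta_R$. Using Proposition \ref{prop:1}(ii) together with the attitude kinematics, I would also record the kinematic identities $\dot\Psi = e_R\cdot e_\Omega$ and $\dot e_R = C(R,R_d)e_\Omega$ with $\norm{C(R,R_d)}\leq 1$.

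Next I would introduce the Lyapunov candidate
\begin{align*}
\mathcal{V} = k_R\Psi(R,R_d) + \tfrac{1}{2}\,e_\Omega\cdot J e_\Omega + c_2\,e_R\cdot e_\Omega,
\end{align*}
whose cross term is precisely what makes the combination $e_A=e_\Omega+c_2 J^{-1}e_R$ of \refeqn{eA} surface in the derivative. Writing $z=(\norm{e_R},\norm{e_\Omega})$ and applying the lower bound \refeqn{PsiLB} and the upper bound \refeqn{PsiUB} of Proposition \ref{prop:1} (the latter valid only while $\Psi<\psi_2$), I obtain the quadratic sandwich $z^T M_{21} z \leq \mathcal{V} \leq z^T M_{22} z$; the third bound $c_2<\sqrt{k_R\lambda_m(J)}$ in \refeqn{c2} is exactly the determinant condition making $M_{21}\succ 0$.

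The heart of the argument is the estimate of $\dot{\mathcal{V}}$. The two $k_R\,e_R\cdot e_\Omega$ contributions (from $k_R\dot\Psi$ and from $e_\Omega\cdot J\dot e_\Omega$) cancel, and the disturbance and robustness terms regroup as $(e_\Omega+c_2J^{-1}e_R)\cdot(\mu_R+\Delta_R)=e_A\cdot(\mu_R+\Delta_R)$. Substituting \refeqn{muR} and using $\norm{\Delta_R}\leq\delta_R$, this residual collapses to $\tfrac{\delta_R\epsilon_R\norm{e_A}}{\delta_R\norm{e_A}+\epsilon_R}<\epsilon_R$, the familiar saturation estimate that injects $\epsilon_R$ as the only non-vanishing term. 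Bounding the remaining pieces with $\norm{C}\leq 1$ and with the eigenvalues of $J$ then assembles everything into $\dot{\mathcal{V}}\leq -z^T W_2 z + \epsilon_R$; positive-definiteness of $W_2$ is guaranteed by the first two bounds on $c_2$ in \refeqn{c2} ($c_2<k_\Omega$ makes its lower-right entry positive, and the middle bound makes its determinant positive). Combining with the sandwich gives $\dot{\mathcal{V}}\leq -\tfrac{\lambda_m(W_2)}{\lambda_M(M_{22})}\mathcal{V}+\epsilon_R$.

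I expect the main obstacle to be not this algebra but the invariance argument that keeps $\Psi(R(t),R_d(t))<\psi_2$ for all $t$, since the upper bound $M_{22}$ --- and hence the entire estimate --- is only valid inside that region. I would close this with a continuity/barrier argument: condition \refeqn{epsilonR} is equivalent to requiring the ultimate-bound level for $\norm{z}^2$, namely $\tfrac{\lambda_M(M_{22})\epsilon_R}{\lambda_m(M_{21})\lambda_m(W_2)}$, to be strictly below $\psi_2(2-\psi_2)$, so that any attempt of $\Psi$ to reach $\psi_2$ (where \refeqn{PsiUB} would force $\norm{e_R}^2\geq\psi_2(2-\psi_2)$) is obstructed by the decrease of $\mathcal{V}$, thereby rendering $\{\Psi<\psi_2\}$ effectively invariant. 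Once invariance is secured, the scalar differential inequality drives $\mathcal{V}$ to a level no larger than $\tfrac{\lambda_M(M_{22})\epsilon_R}{\lambda_m(W_2)}$, and the lower sandwich $\lambda_m(M_{21})\norm{z}^2\leq\mathcal{V}$ converts this into the stated ultimate bound \refeqn{uubA} on $\norm{e_R}^2+\norm{e_\Omega}^2$.
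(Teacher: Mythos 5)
Your proposal is correct and follows essentially the same route as the paper's own proof in Appendix \ref{sec:pfAtt}: the identical Lyapunov candidate $\mathcal{V}_2=k_R\Psi+\tfrac12 e_\Omega\cdot Je_\Omega+c_2e_R\cdot e_\Omega$, the same sandwich matrices $M_{21},M_{22}$, the same saturation estimate $e_A\cdot(\Delta_R+\mu_R)\leq\epsilon_R$, and the same role for \refeqn{c2} and \refeqn{epsilonR}. Your ``barrier'' step is just the paper's sub-level-set argument ($S_\gamma\subset D_2$ for $d_1<\gamma<d_2$) phrased dynamically, so there is nothing substantively different to compare.
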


\begin{proof}
See Appendix \ref{sec:pfAtt}.
\end{proof}

From \refeqn{eRb0}, the initial attitude error should be less than $180^\circ$, in terms of the rotation angle about the eigenaxis between $R$ and $R_d$. We can further show that the attitude tracking errors exponentially converges to \refeqn{uubA}, where the size of the ultimate bound can be reduced by the controller parameter $\epsilon_R$. It is also possible to achieve exponential attractiveness if the constant $\epsilon_R$ in \refeqn{muR} is replaced by $\epsilon_R \exp(-\beta t)$ for $\beta >0$. All of these results can be applied to a nonlinear robust control problem for the attitude dynamics of any rigid body.

Asymptotic tracking of the quadrotor attitude does not require specification of the thrust magnitude.   As an auxiliary problem, the thrust magnitude can be chosen in many different ways to achieve an additional translational motion objective. For example, it can be used to asymptotically track a quadrotor altitude command~\cite{LeeLeo}.

Since the translational motion of the quadrotor UAV can only be partially controlled; this flight mode is most suitable for short time periods where an attitude maneuver is to be completed.   


\section{POSITION CONTROLLED FLIGHT MODE}\label{sec:PCFM}

We now introduce a nonlinear controller for the position controlled flight mode. This flight mode requires analysis of the coupled translational and rotational equations of motion; hence, we make use of the notation and analysis in the prior section to describe the properties of the closed loop system in this flight mode.  

\subsection{Position Tracking Errors}

An arbitrary smooth position tracking command $x_d(t) \in \Re^3$ is chosen.    The position tracking errors for the position and the velocity are given by:
\begin{align}
e_x & = x - x_d,\\
e_v & = v - \dot x_d.
\end{align}
Following the prior definition of the attitude error and the angular velocity error, we define
\begin{align}
e_R = \frac{1}{2} (R_c^TR - R^T R_c)^\vee, \quad 
e_\Omega = \Omega - R^T R_c \Omega_c\label{eqn:eWc},
\end{align}
and the computed attitude $R_c(t)\in\SO$ and computed angular velocity $\Omega_c \in \mathbb{R}^3$ are given by
\begin{align}
R_c=[ b_{1_c};\, b_{3_c}\times b_{1_c};\, b_{3_c}],\quad \hat\Omega_c = R_c^T \dot R_c\label{eqn:RdWc},
\end{align}
where $ b_{3_c} \in \Sph^2$ is defined by
\begin{align}
 b_{3_c} = -\frac{-k_x e_x - k_v e_v - mg e_3 +m\ddot x_d+\mu_x}{\norm{-k_x e_x - k_v e_v - mg e_3 + m\ddot x_d+\mu_x}},\label{eqn:Rd3}
\end{align}
and $ b_{1c} \in \Sph^2$ is selected to be orthogonal to $ b_{3c}$, thereby guaranteeing that $R_c \in \SO$. The constants $k_x,k_v$ are positive, and the control input term $\mu_x$ is defined later in \refeqn{mux}. We assume that 
\begin{align}
\norm{-k_x e_x - k_v e_v - mg e_3 + m\ddot x_d+\mu_x} \neq 0,\label{eqn:A1}
\end{align}
and the commanded acceleration is uniformly bounded such that
\begin{align}
\|-mge_3+m\ddot x_d\| < B\label{eqn:B}
\end{align}
for a given positive constant $B$. 

\subsection{Position Tracking Controller}

The nonlinear controller for the position controlled flight mode, described by control expressions for the  thrust magnitude and the  moment vector, are:
\begin{align}
f & = ( k_x e_x + k_v e_v + mg e_3-m\ddot x_d-\mu_x)\cdot Re_3,\label{eqn:f}\\
M & = -k_R e_R -k_\Omega e_\Omega +\Omega\times J\Omega\nonumber\\
&\quad -J(\hat\Omega R^T R_c \Omega_c - R^T R_c\dot\Omega_c+\mu_R),\label{eqn:M}\\
\mu_x & = -\frac{\delta_x^{\tau+2} e_B \|e_B\|^\tau}{\delta_x^{\tau+1}\|e_B\|^{\tau+1}+\epsilon_x^{\tau+1}},\label{eqn:mux}\\
e_B & = e_v + \frac{c_1}{m} e_x,\label{eqn:eB}\\
\mu_R & = -\frac{\delta_R^2 e_A}{\delta_R \|e_A\|+\epsilon_R},\label{eqn:muR1}\\
e_A & = e_\Omega + c_2 J^{-1} e_R,\label{eqn:eA1}
\end{align}
where $k_x,k_v,k_R,k_\Omega,c_1,c_2,\epsilon_x,\epsilon_R,\tau$ are positive constants, and $\tau > 2$.


The nonlinear controller given by equations \refeqn{f}, \refeqn{M} can be given a backstepping interpretation.   The computed attitude $R_c$ given in equation \refeqn{RdWc} is selected so that the thrust axis $-b_3$ of the quadrotor UAV tracks the computed direction given by $-b_{3_c}$ in \refeqn{Rd3}, which is a direction of the thrust vector that achieves position tracking.   The moment expression \refeqn{M} causes the attitude of the quadrotor UAV to asymptotically track $R_c$ and the thrust magnitude expression \refeqn{f} achieves asymptotic position tracking.  

The closed loop system for this position controlled flight mode is illustrated in \reffig{CS}.   The corresponding closed loop control system is described by equations \refeqn{EL1}-\refeqn{EL4}, using the controller expressions \refeqn{f}-\refeqn{eA1}. We now state the result that the tracking errors $(e_x, e_v, e_R, e_\Omega)$ are uniformly ultimately bounded.

\begin{figure}
\centerline{
\setlength{\unitlength}{2.1em}\centering\footnotesize
\begin{picture}(11,3.2)(0.0,-3.2)
\put(1.2,-1.5){\framebox(2,1.0)[c]
{\shortstack[c]{Force\\controller}}}
\put(4.0,-2.2){\framebox(2,1.0)[c]
{\shortstack[c]{Moment\\controller}}}
\put(0,-0.95){\vector(1,0){1.2}}
\put(3.2,-1.35){\vector(1,0){0.8}}
\put(3.2,-0.65){\vector(1,0){4.1}}
\put(0,-1.8){\vector(1,0){4.0}}
\put(6,-1.7){\vector(1,0){1.3}}
\put(7.3,-2.25){\framebox(2.5,1.8)[c]
{\shortstack[c]{Quadrotor\\Dynamics}}}
\put(9.8,-1.35){\vector(1,0){1.2}}
\put(6.6,-1.0){$f$}
\put(6.5,-2.05){$M$}
\put(3.30,-1.15){$b_{3_c}$}
\put(0.25,-0.80){$x_d$}
\put(0.0,-1.65){($b_{1_d}$)}
\put(5.5,-3.2){$x,v,R,\Omega$}
\put(10.3,-1.35){\line(0,-1){1.5}}
\put(10.3,-2.85){\line(-1,0){8.1}}
\put(2.2,-2.85){\vector(0,1){1.35}}
\put(2.2,-2.10){\vector(1,0){1.8}}
\put(2.2,-2.10){\circle*{0.1}}
\put(10.3,-1.35){\circle*{0.1}}
\put(0.9,-2.4){\dashbox{0.08}(5.4,2.1)[c]{}}
\put(2.8,-2.7){Controller}
\end{picture}
}
\caption{Controller structure for position controlled flight mode}\label{fig:CS}
\end{figure}
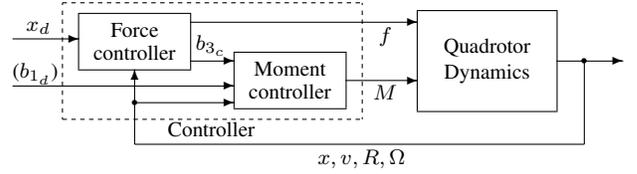

\begin{prop}{(Robustness of Position Controlled Flight Mode)}\label{prop:Pos}
Suppose that the initial conditions satisfy
\begin{gather}
\Psi(R(0),R_c(0)) < \psi_1 < 1,\label{eqn:Psi0}\\
\|e_x(0)\| < e_{x_{\max}},\label{eqn:ex0}
\end{gather}
for positive constants $\psi_1, e_{x_{\max}}$. Consider the control inputs $f,M$ defined in \refeqn{f}-\refeqn{eA1}.
For positive constants $k_x,k_v$, we choose positive constants $c_1,c_2,k_R,k_\Omega,\epsilon_x,\epsilon_R$ such that
\begin{gather}
c_1 < \min\braces{k_v(1-\alpha),\;\frac{4mk_xk_v(1-\alpha)^2}{k_v^2(1+\alpha)^2+4m k_x(1-\alpha)},\; \sqrt{k_xm} },\label{eqn:c1b}\\
c_2 < \min\bigg\{ k_\Omega, \frac{4k_\Omega k_R\lambda_{m}(J)^2}{k_\Omega^2\lambda_{M}(J)+4k_R\lambda_{m}(J)^2}, \sqrt{k_R\lambda_{m}(J)}\bigg\},\label{eqn:c2b}\\
\lambda_{m}(W_2) > \frac{\|W_{12}\|^2}{4\lambda_{m}(W_1)},\label{eqn:kRkWb}\\
\begin{aligned}
&\epsilon_x+\epsilon_R <\nonumber\\
& \frac{\min\{ \lambda_m(M_{11}), \lambda_m(M_{21})\}
\min\{ e_{x_{\max}}^2,\psi_1(2-\psi_1)\}}%
{\max\{ \lambda_M(M_{12}),\lambda_M(M_{22}')\}}  \lambda_m(W),
\end{aligned}\\\label{eqn:epsilon_bound}
\end{gather}
where $\alpha=\sqrt{\psi_1(2-\psi_1)}$, and the matrices $M_{11},M_{12},M_{21}$, $M_{22}',W_1,W_{12},W_2,W\in\Re^{2\times 2}$ are given by
\begin{gather*}
M_{11} = \frac{1}{2}\begin{bmatrix} k_x & -c_1 \\ -c_1 & m\end{bmatrix},\quad
M_{12} = \frac{1}{2}\begin{bmatrix} k_x & c_1 \\ c_1 & m\end{bmatrix},\\
M_{21} = \frac{1}{2}\begin{bmatrix} k_R & -c_2 \\ -c_2 & \lambda_{m}(J)  \end{bmatrix},\quad
M'_{22} = \frac{1}{2}\begin{bmatrix} \frac{2k_R}{2-\psi_1} & c_2 \\ c_2 & \lambda_{M}(J)\end{bmatrix},
\end{gather*}
\begin{align*}
W_1 &= \begin{bmatrix} \frac{c_1k_x}{m}(1-\alpha) & -\frac{c_1k_v}{2m}(1+\alpha)\\
-\frac{c_1k_v}{2m}(1+\alpha) & k_v(1-\alpha)-c_1\end{bmatrix},\\
W_{12}&=\begin{bmatrix}
\frac{c_1}{m}(B+\delta_x) & 0 \\ B+\delta_x+k_xe_{x_{\max}} & 0\end{bmatrix},\\
W_2 &= \begin{bmatrix} \frac{c_2k_R}{\lambda_{M}(J)} & -\frac{c_2k_\Omega}{2\lambda_{m}(J)} \\ 
-\frac{c_2k_\Omega}{2\lambda_{m}(J)} & k_\Omega-c_2 \end{bmatrix},\\
W & =\begin{bmatrix}
\lambda_{m}(W_1) & -\frac{1}{2}\|W_{12}\|_2\\
-\frac{1}{2}\|W_{12}\|_2 & \lambda_m(W_2)
\end{bmatrix}.
\end{align*}
Then, the tracking errors $(e_x,e_v,e_R,e_\Omega)$ are uniformly ultimately bounded, and the ultimate bound is given by
\begin{align}
\Big\{ \|e_x\|^2&+\|e_v\|^2+\|e_R\|^2+\|e_\Omega\|^2 < \nonumber\\
&\frac{\max\{ \lambda_M(M_{12}),\lambda_M(M_{22}')\}}%
{\min\{ \lambda_m(M_{11}), \lambda_m(M_{21})\}\lambda_m(W)}(\epsilon_x+\epsilon_R)
\Big\} 
\label{eqn:uubPos}.
\end{align} 
\end{prop}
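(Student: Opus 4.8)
The plan is to run a single composite Lyapunov argument that treats the coupled translational and rotational error dynamics together. First I would introduce
\[ \mathcal{V} = \tfrac12 k_x\norm{e_x}^2 + \tfrac12 m\norm{e_v}^2 + c_1\, e_x\cdot e_v + \tfrac12\, e_\Omega\cdot J e_\Omega + k_R\Psi(R,R_c) + c_2\, e_R\cdot e_\Omega . \]
Writing $z_x=[\norm{e_x},\norm{e_v}]^T$ and $z_R=[\norm{e_R},\norm{e_\Omega}]^T$, and using the lower and upper bounds on $\Psi$ from Proposition~\ref{prop:1}(iv)--(v) (valid because $\Psi<\psi_1<1$), I would sandwich $\mathcal{V}$ as $z_x^T M_{11} z_x + z_R^T M_{21} z_R \le \mathcal{V} \le z_x^T M_{12} z_x + z_R^T M_{22}' z_R$. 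The conditions $c_1<\sqrt{k_x m}$ and $c_2<\sqrt{k_R\lambda_m(J)}$ from \refeqn{c1b}--\refeqn{c2b} make $M_{11},M_{21}$ positive-definite, so $\mathcal{V}$ is a genuine Lyapunov candidate on the domain specified by \refeqn{Psi0}--\refeqn{ex0}.

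Next I would derive the closed-loop error dynamics. For the rotational subsystem, substituting the moment law \refeqn{M} into \refeqn{EL4} cancels the feedforward and gyroscopic terms and leaves $J\dot e_\Omega = -k_R e_R - k_\Omega e_\Omega + \Delta_R$ together with the robust term, while $\dot\Psi = e_R\cdot e_\Omega$ follows from Proposition~\ref{prop:1}(ii). The delicate part is the translational subsystem: from \refeqn{EL2} and the thrust law \refeqn{f} one obtains $m\dot e_v = -k_x e_x - k_v e_v + \mu_x + \Delta_x + X$, where $X = (A\cdot Re_3)Re_3 - A$ with $A=-k_xe_x-k_ve_v-mge_3+m\ddot x_d+\mu_x$ is the force residual caused by the mismatch between the true thrust axis $Re_3$ and the computed axis $b_{3_c}=-A/\norm{A}$; note $X=0$ when $e_R=0$. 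I would bound $\norm{X}$ by the product of $\norm{A}$ and a function of $\norm{e_R}$ and, using \refeqn{B}, $\norm{\mu_x}\le\delta_x$, and the a~priori bound $\norm{e_x}<e_{x_{\max}}$, estimate $\norm{A}\le k_x e_{x_{\max}}+k_v\norm{e_v}+B+\delta_x$. This is exactly the estimate that produces the off-diagonal block $W_{12}$ and couples the two subsystems.

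I would then differentiate $\mathcal{V}$ along trajectories. Collecting the quadratic feedback terms gives $-z_x^T W_1 z_x - z_R^T W_2 z_R$; the coupling through $X$ contributes a cross term dominated by $\norm{W_{12}}_2\norm{z_x}\norm{z_R}$; and the uncertainty and robust terms collapse to $e_B\cdot(\mu_x+\Delta_x)$ and $e_A\cdot(\mu_R+\Delta_R)$. Using $\norm{\Delta_x}\le\delta_x$, $\norm{\Delta_R}\le\delta_R$ and the explicit expressions \refeqn{mux}, \refeqn{muR1} --- that is, maximizing the scalar maps $u\mapsto u\epsilon^{p}/(u^{p}+\epsilon^{p})$ --- these residuals are bounded by $\epsilon_x$ and $\epsilon_R$ respectively. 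Altogether $\dot{\mathcal{V}} \le -z^T W z + \epsilon_x + \epsilon_R$ with $z=[\norm{z_x},\norm{z_R}]^T$, where positive-definiteness of $W_1,W_2$ comes from the remaining parts of \refeqn{c1b}--\refeqn{c2b} (with $\alpha<1$ since $\psi_1<1$) and positive-definiteness of $W$ is precisely the Schur-complement hypothesis \refeqn{kRkWb}.

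Finally, combining $\dot{\mathcal{V}}\le -\lambda_m(W)\norm{z}^2+\epsilon_x+\epsilon_R$ with the quadratic bounds on $\mathcal{V}$ yields uniform ultimate boundedness by the standard theorem, and equating the bounds gives the ultimate set \refeqn{uubPos}. The main obstacle I anticipate is twofold, and both parts live in the translation--rotation coupling: first, rigorously bounding the thrust-direction residual $X$ by $\norm{e_R}$ with the correct gains entering $W_{12}$; and second, showing that the sublevel set on which $\Psi<\psi_1$ and $\norm{e_x}<e_{x_{\max}}$ is forward invariant, so that the a~priori bounds used to construct $W_{12}$ remain valid for all $t\ge 0$. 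This is where the smallness condition \refeqn{epsilon_bound} on $\epsilon_x+\epsilon_R$ enters, guaranteeing the ultimate bound stays strictly inside the region of validity.
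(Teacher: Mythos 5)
Your proposal follows essentially the same route as the paper's proof in Appendix C: the same composite Lyapunov function $\mathcal{V}=\mathcal{V}_1+\mathcal{V}_2$, the same quadratic sandwich with $M_{11},M_{12},M_{21},M_{22}'$, the same residual $X$ obtained by adding and subtracting the projected thrust along $R_ce_3$, the same $\epsilon_x,\epsilon_R$ bounds on the robust terms, and the same conclusion $\dot{\mathcal{V}}\le -z^TWz+\epsilon_x+\epsilon_R$ followed by sublevel-set invariance. The one bookkeeping point to watch is that the paper splits the bound on $\|A\|\,\|e_R\|$ so that the $k_x\|e_x\|$ and $k_v\|e_v\|$ contributions are absorbed into the diagonal of $W_1$ through the factor $\alpha$ (producing the $(1\pm\alpha)$ entries), reserving $e_{x_{\max}}$ only for the genuinely cubic term $k_x\|e_R\|\|e_x\|\|e_v\|$; bounding $\|A\|$ wholesale by $k_xe_{x_{\max}}+k_v\|e_v\|+B+\delta_x$ as you suggest would produce slightly different $W_1,W_{12}$ than those stated in the proposition.
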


\begin{proof}
See Appendix \ref{sec:pfPos}.
\end{proof}

This proposition shows that the proposed control system is robust to bounded, and unstructured uncertainties in the dynamics of a quadrotor UAV. Similar to Proposition \ref{prop:Att}, the ultimate bound can be arbitrarily reduced by choosing smaller $\epsilon_x,\epsilon_R$, and it is possible to obtain exponential attractiveness.

Proposition \ref{prop:Pos} requires that the initial attitude error is less than $90^\circ$ in \refeqn{Psi0}. Suppose that this is not satisfied, i.e. $1\leq\Psi(R(0),R_c(0))<2$.    We can still apply Proposition \ref{prop:Att}, which states that the attitude error exponentially decreases until it enters the ultimate bound given by \refeqn{uubA}. If the constant $\epsilon_R$ is sufficiently small, we can guarantee that the attitude error function decreases to satisfy \refeqn{Psi0} in a finite time. Therefore, by combining the results of Proposition \ref{prop:Att} and \ref{prop:Pos}, we can show ultimate boundedness of the tracking errors when $\Psi(R(0),R_c(0))<2$.

\begin{prop}{(Robustness of Position Controlled Flight Mode with a Larger Initial Attitude Error)}\label{prop:Pos2}
Suppose that the initial conditions satisfy
\begin{gather}
1\leq \Psi(R(0),R_c(0))<\psi_2 < 2\label{eqn:eRb3},\\
\|e_x(0)\| < e_{x_{\max}},
\end{gather}
for a constant $\psi_2,e_{x_{\max}}$. Consider the control inputs $f,M$ defined in \refeqn{f}-\refeqn{eA1}, where the control parameters $k_x,k_v,k_R,k_\Omega,c_1,c_2,\epsilon_x,\epsilon_R$ satisfy \refeqn{c1b}-\refeqn{epsilon_bound} for a positive constant $\psi_1<1$. If the constant $\epsilon_R$ is sufficiently small such that
\begin{align}
\epsilon_R < \frac{\lambda_m(M_{21})\lambda_m(W_2)}{\lambda_M(M_{22})} 
\psi_1(2-\psi_1),\label{eqn:epsilonR_bound_Pos2}
\end{align}
then the tracking errors $(e_x,e_v,e_R,e_\Omega)$ are uniformly ultimately bounded.
\end{prop}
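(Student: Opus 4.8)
My plan is to establish the result by a two-phase cascade argument that reduces the large-initial-error case to the already-proven Proposition~\ref{prop:Pos}. The foundation is the observation that, in the position controlled flight mode, the closed-loop attitude error subsystem is autonomous in $(e_R,e_\Omega)$ and coincides exactly with the one analyzed for the attitude controlled flight mode. I would first verify this by differentiating $e_\Omega$ in \refeqn{eWc}: using $\dot R=R\hat\Omega$, $\dot R_c=R_c\hat\Omega_c$, and $\hat\Omega_c\Omega_c=0$ gives $J\dot e_\Omega = J\dot\Omega + J(\hat\Omega R^TR_c\Omega_c - R^TR_c\dot\Omega_c)$, and substituting the moment law \refeqn{M} together with \refeqn{EL4} cancels every term involving the computed command $R_c,\Omega_c,\dot\Omega_c$, leaving $J\dot e_\Omega = -k_Re_R-k_\Omega e_\Omega+\mu_R+\Delta_R$. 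Since $\mu_R$ in \refeqn{muR1}--\refeqn{eA1} depends only on $(e_R,e_\Omega)$, and the kinematics of $e_R$ depend only on the relative attitude $R^TR_c$ and on $e_\Omega$, the attitude error dynamics are identical to those underlying Proposition~\ref{prop:Att}, independent of how $R_c$ is generated from the translational variables.

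With this decoupling, I would analyze the attitude transient first. Because \refeqn{eRb3} gives $\Psi(R(0),R_c(0))<\psi_2<2$, Proposition~\ref{prop:Att} applies and guarantees that $(e_R,e_\Omega)$ are uniformly ultimately bounded and, as remarked after that proposition, converge exponentially into the set \refeqn{uubA}. The condition \refeqn{epsilonR_bound_Pos2} is chosen precisely so that this ultimate set satisfies $\|e_R\|^2<\psi_1(2-\psi_1)$. Using the exact identity $\|e_R\|^2=\Psi(2-\Psi)$, this is equivalent to $\Psi<\psi_1$ or $\Psi>2-\psi_1$; since the Lyapunov function of Proposition~\ref{prop:Att}, which dominates $k_R\Psi$, decreases exponentially from an initial value consistent with $\Psi(0)<\psi_2<2$, the trajectory is driven onto the lower branch rather than toward the spurious branch $\Psi>2-\psi_1$. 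I therefore conclude that there is a finite time $T$ with $\Psi(R(T),R_c(T))<\psi_1<1$. Crucially, by the decoupling above, $T$ admits a finite upper bound depending only on $\Psi(0)$, the attitude gains, and $\delta_R$, uniformly over the translational state.

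It then remains to control the translational errors on $[0,T]$ and restart the analysis at $t=T$. Along \refeqn{EL1}--\refeqn{EL2} with the thrust law \refeqn{f}, the error dynamics for $(e_x,e_v)$ have a right-hand side that grows at most affinely in $(e_x,e_v)$ — the forcing consists of the bounded uncertainty $\Delta_x$, the bounded commanded acceleration \refeqn{B}, the bounded term $\mu_x$, and the attitude-dependent thrust-direction mismatch, which is itself bounded and exponentially decaying. A Gr\"onwall estimate then rules out finite escape and yields an explicit bound on $\|e_x(T)\|$ and $\|e_v(T)\|$ in terms of $T$ and the initial translational errors. Once $\Psi(R(T),R_c(T))<\psi_1<1$ holds, I would re-initialize the clock at $T$ and invoke Proposition~\ref{prop:Pos} — whose parameter conditions \refeqn{c1b}--\refeqn{epsilon_bound} are assumed — to conclude that $(e_x,e_v,e_R,e_\Omega)$ are uniformly ultimately bounded for $t\ge T$, and hence for all $t\ge 0$.

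The main obstacle I anticipate lies in reconciling this final step with the translational drift permitted during the first phase: Proposition~\ref{prop:Pos} requires $\|e_x(T)\|<e_{x_{\max}}$, yet $e_x$ may grow while the attitude is still far from $R_c$. Closing this gap requires the explicit Gr\"onwall bound above to remain below $e_{x_{\max}}$, which couples the admissible region of attraction to the attitude transient through $T$. Since $T$ is fixed by the attitude gains and $\Psi(0)$ and is independent of the translational state, one resolves this by taking the attitude gains large enough (equivalently $T$ small enough) or the initial translational errors small enough that the drift keeps $\|e_x(T)\|$ within the region of attraction appearing through $W_{12}$ and \refeqn{epsilon_bound} in Proposition~\ref{prop:Pos}. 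Making this dependence quantitative, so that the enlarged translational error at $t=T$ still lies in that region, is the technical crux of the argument.
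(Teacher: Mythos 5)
Your proposal follows essentially the same route as the paper: invoke Proposition~\ref{prop:Att} on the decoupled attitude error subsystem to conclude from \refeqn{epsilonR_bound_Pos2} that $\Psi<\psi_1$ is reached in a finite time $t^*$, bound the translational errors on $[0,t^*]$ by a Gr\"onwall-type estimate (the paper uses $\mathcal{V}_3=\frac{1}{2}\|e_x\|^2+\frac{1}{2}m\|e_v\|^2$ with $\dot{\mathcal{V}}_3\le d_1\mathcal{V}_3+d_2\sqrt{\mathcal{V}_3}$), and then hand off to Proposition~\ref{prop:Pos}. The ``technical crux'' you flag --- guaranteeing $\|e_x(t^*)\|<e_{x_{\max}}$ so that Proposition~\ref{prop:Pos} actually applies at $t^*$ --- is a genuine point, but the paper's own proof does not resolve it either: it only establishes that $z_1$ is bounded on $[0,t^*]$ and concludes from there, so your explicit acknowledgment of that re-initialization condition goes slightly beyond the published argument rather than falling short of it.
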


\begin{proof}
See Appendix \ref{sec:pfPos2}.
\end{proof}

\subsection{Direction of the First Body-Fixed Axis}

As described above, the construction of the orthogonal matrix $R_c$ involves having its third column $b_{3_c}$ specified by a normalized feedback function, and its first column $b_{1_c}$ is chosen to be orthogonal to the third column. The unit vector $b_{1_c}$ can be arbitrarily chosen in the plane normal to $b_{3_c}$, which corresponds to a one-dimensional degree of choice. This reflects the fact that the quadrotor UAV has four control inputs that are used to track a three-dimensional position command.

By choosing $b_{1_c}$ properly, we constrain the asymptotic direction of the first body-fixed axis. 
Here, we propose to specify the \textit{projection} of the first body-fixed axis onto the plane normal to $b_{3_c}$. In particular, we choose a desired direction $b_{1_d}\in\Sph^2$, that is not parallel to $b_{3_c}$, and $b_{1_c}$ is selected as $b_{1_c}=\mathrm{Proj}[b_{1_d}]$, where $\mathrm{Proj}[\cdot]$ denotes the normalized projection onto the plane perpendicular to $b_{3_c}$. In this case, the first body-fixed axis does not converge to $b_{1_d}$, but it converges to the projection of $b_{1_d}$, i.e. $b_1\rightarrow b_{1_c}=\mathrm{Proj}[b_{1_d}]$ as $t\rightarrow\infty$, up to the ultimate bound described by \refeqn{uubPos}. In other words, the first body-fixed axis converges to a small neighborhood of the intersection of the plane normal to $b_{3_c}$ and the plane spanned by $b_{3_c}$ and $b_{1_d}$. This can be used to specify the heading direction of a quadrotor UAV in the horizontal plane (see \reffig{b1d} and \cite{LeeLeo} for details).



\begin{figure}
\centerline{
\renewcommand{\xyWARMinclude}[1]{\includegraphics[width=0.42\columnwidth]{#1}}
{\footnotesize\selectfont
$$\begin{xy}
\xyWARMprocessEPS{b1d}{pdf}
\xyMarkedImport{}
\xyMarkedMathPoints{1-15}
\end{xy}
$$}}
\caption{Convergence property of the first body-fixed axis: in the proposed control system, $b_{3_c}$ is determined by \refeqn{Rd3}. We choose a desired direction of the first body fixed axis, namely $b_{1_d}$ that is not parallel to $b_{3_c}$, and project it on to the plane normal to $b_{3_c}$ to obtain $b_{1_c}$. This guarantees that the first body-fixed axis converges to $b_{1_c}$, and therefore it asymptotically lies in the plane spanned by $b_{1_d}$ and $b_{3_c}$. As $b_{3_c}$ converges to the direction of $ge_3-\ddot x_d$ in \refeqn{Rd3}, this allows us to specify the direction of the first body-fixed axis in the plane normal to $ge_3-\ddot x_d$. For all cases, the ultimate convergence error is described by \refeqn{uubPos}.}\label{fig:b1d}
\vspace*{-0.4cm}
\end{figure}

\section{NUMERICAL EXAMPLES}\label{sec:NE}
Numerical results are presented to demonstrate the prior approach for performing complex flight maneuvers. The parameters are chosen to match a quadrotor UAV described in~\cite{PouMahACRA06}.
\begin{gather*}
J=[0.0820,0.0845,0.1377]\,\mathrm{kg-m^2},\quad m=4.34\,\mathrm{kg}\\
 d=0.315\,\mathrm{m},\quad c_{\tau f}=8.004\times 10^{-3}\,\mathrm{m}.
\end{gather*}
The controller parameters are chosen as follows:
\begin{gather*}
k_x=59.02,\quad k_v=24.30,\quad k_R=8.81,\quad k_\Omega = 1.54\\
c_1=3.6,\quad c_2=0.6,\quad \epsilon_x=\epsilon_R=0.04.
\end{gather*}
We consider a fixed disturbance for the translational dynamics, and an oscillatory disturbance for the rotational dynamics as follows:
\begin{align*}
\Delta_x &= [2.50,\,
    1.25,\,
    2.00]^T\,\mathrm{N},\\
\Delta_R(t) & = \frac{2}{\sqrt{3}}[\sin(8\pi t),\, \sin(\pi t),\, \cos(4\pi t)]^T\,\mathrm{Nm}.
\end{align*}
The corresponding bounds of the disturbances are given by $\delta_x = 4.34 $ and $\delta_R=2$. We consider the following two cases.

\paragraph*{Case I (elliptic helix)}
The initial conditions are given by
\begin{gather*}
x(0)=[0.1,0,0]^T\,\mathrm{m},\quad v(0)=[0,0,0]^T\,\mathrm{m/s},\\
R(0)=I,\quad \Omega(0)=[0,0,0]^T\,\mathrm{rad/s}.
\end{gather*}
The desired position command is an elliptic helix, given by
\begin{align*}
x_d(t)=[0.4t,\, 0.4\sin(\pi t),\, -0.6\cos(\pi t)]^T\,\mathrm{m},
\end{align*}
and the desired heading direction is fixed as $\vec b_{1_d}=[1,0,0]^T$. This corresponds to the position controlled flight mode described in Proposition \ref{prop:Pos}, as the initial attitude error is $\Psi(0)=0.14<1$. 

Figure \ref{fig:I} shows simulation results, where the position tracking error converges to a small neighborhood of the zero tracking errors, and the terminal tracking error is $1.2\,\mathrm{cm}$. For comparison, we set the robust control input terms to zero, i.e. $\mu_x=\mu_R=0$, and we repeat numerical simulations to obtain Figure \ref{fig:Ib}. It is observed that the angular velocity tracking error is mostly driven by the disturbance $\Delta_R$, and the corresponding position tracking error is larger than $0.1\,\mathrm{m}$. This illustrates the robustness of the proposed control system for a complex maneuver with larger disturbances.

\begin{figure}
\centerline{
	\subfigure[Attitude error function $\Psi$]{
		\includegraphics[width=0.48\columnwidth]{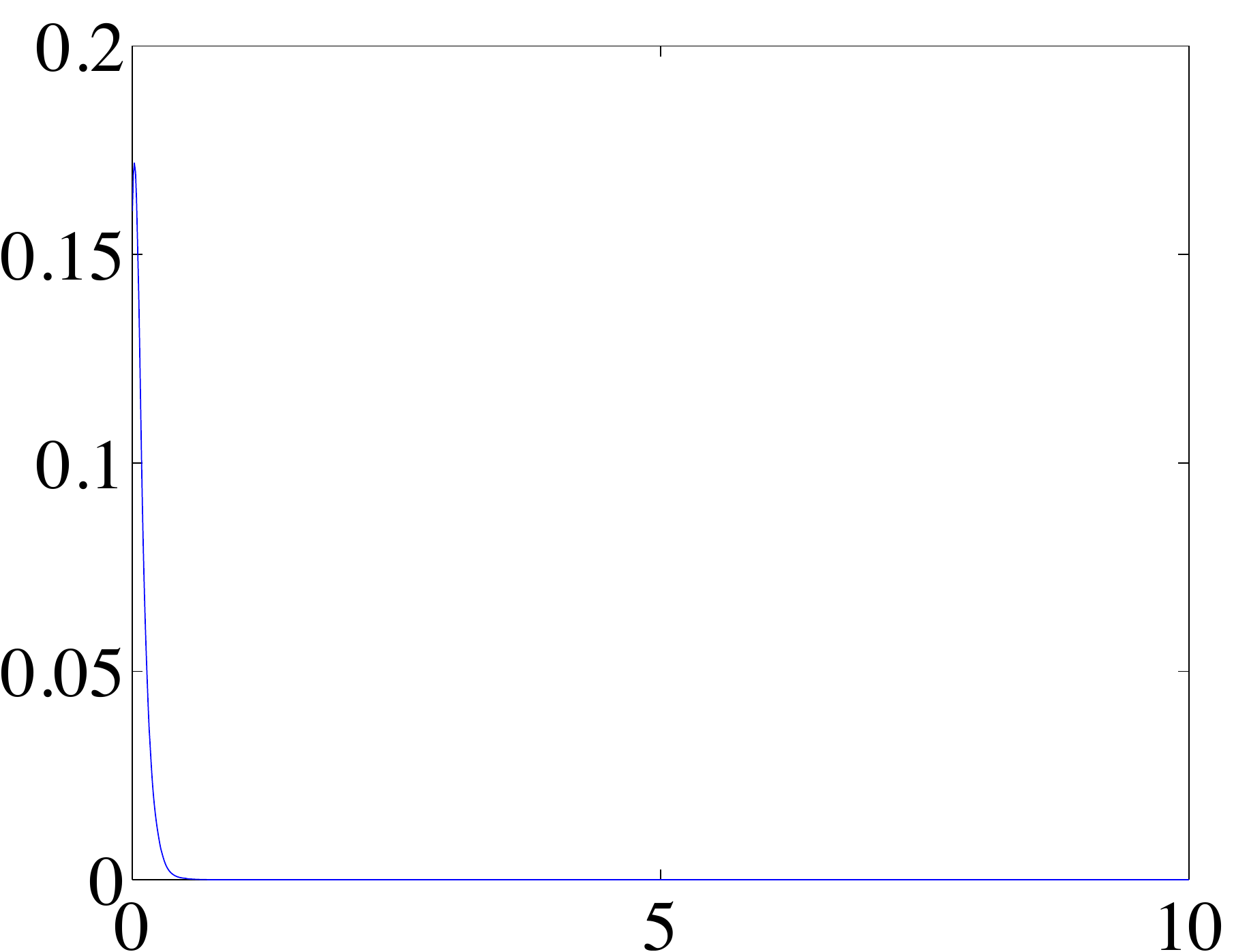}\label{fig:IPsi}}
		\hfill
	\subfigure[Position error $e_x$ ($\mathrm{m}$)]{
		\includegraphics[width=0.485\columnwidth]{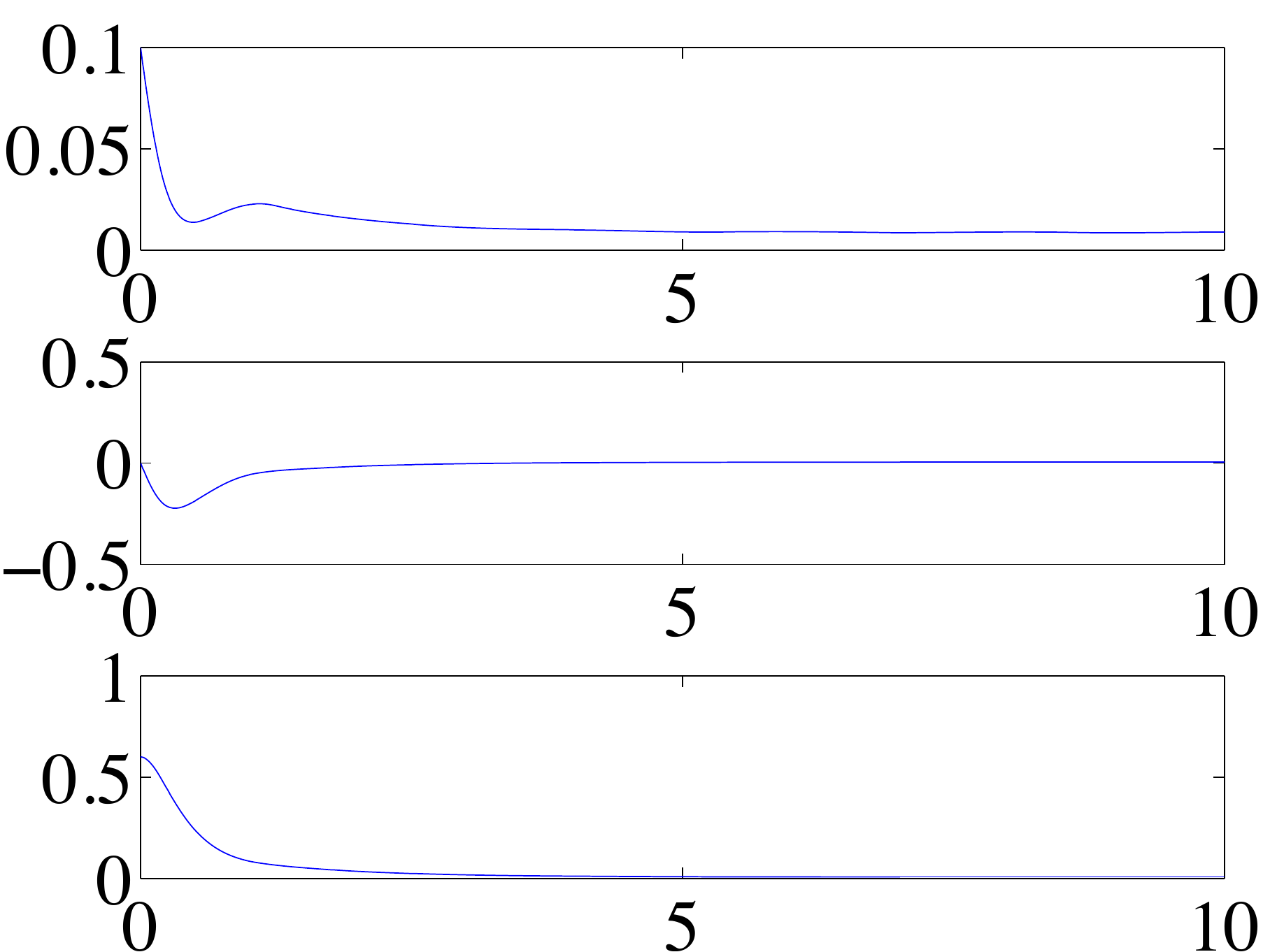}\label{fig:Ix}}
}
\centerline{
	\subfigure[Angular velocity error $e_\Omega$ ($\mathrm{rad/sec}$)]{
		\includegraphics[width=0.48\columnwidth]{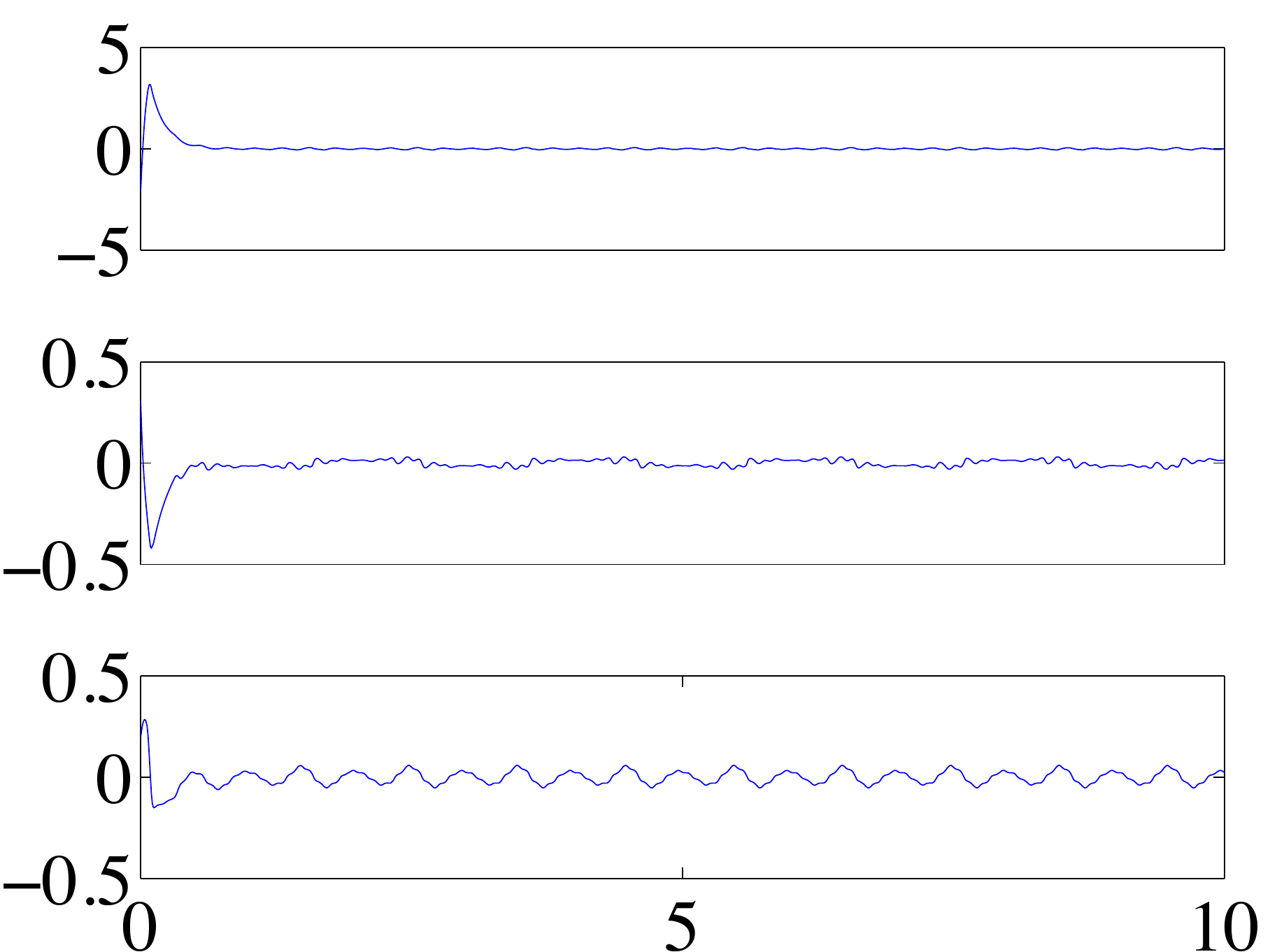}\label{fig:IW}}
		\hfill
	\subfigure[Thrust of each rotor ($\mathrm{N}$)]{
		\includegraphics[width=0.50\columnwidth]{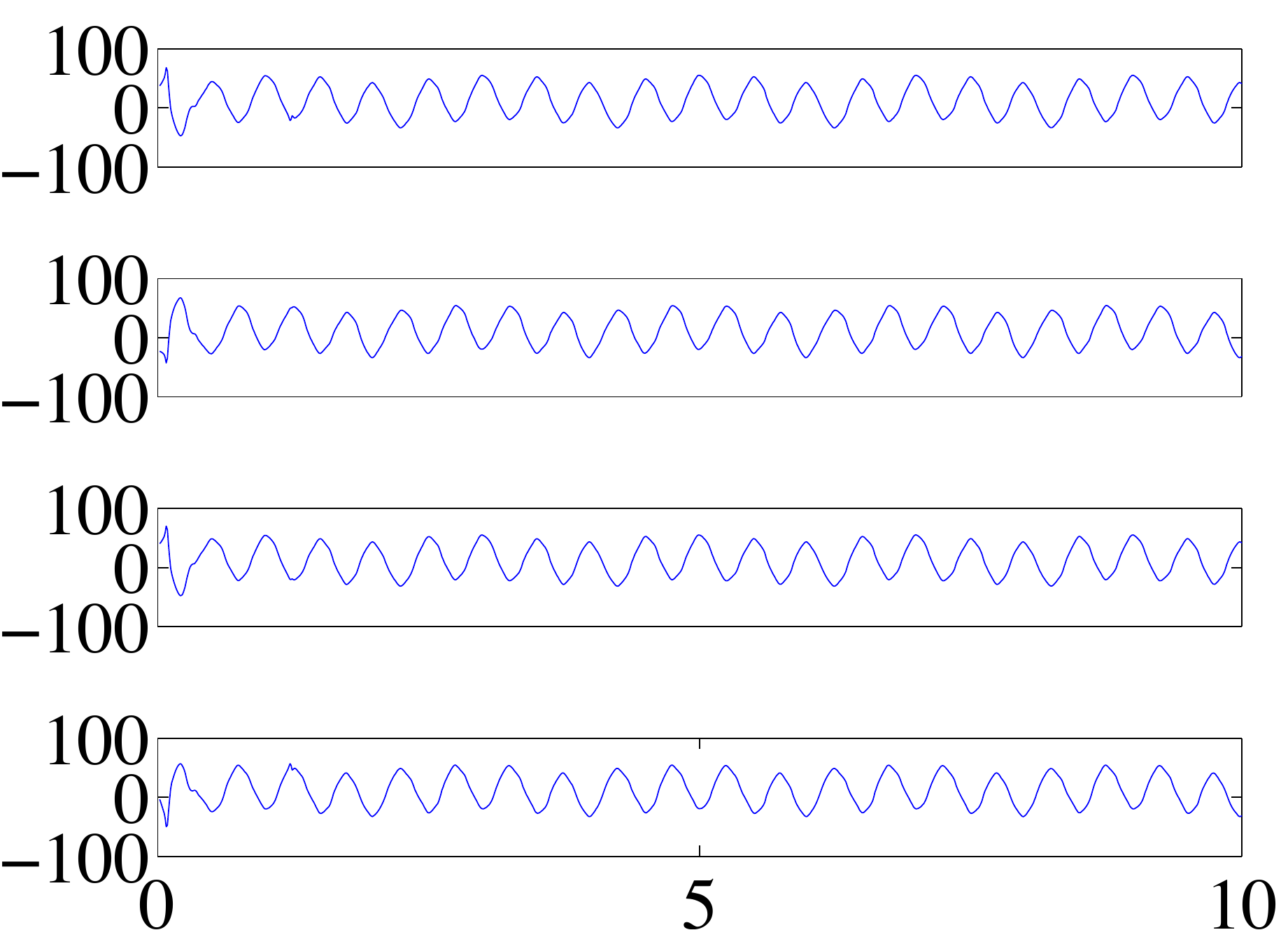}}
}
\caption{Case I: robust position controlled flight mode to follow an elliptic helix}\label{fig:I}
\end{figure}

\begin{figure}
\centerline{
	\subfigure[Attitude error function $\Psi$]{
		\includegraphics[width=0.48\columnwidth]{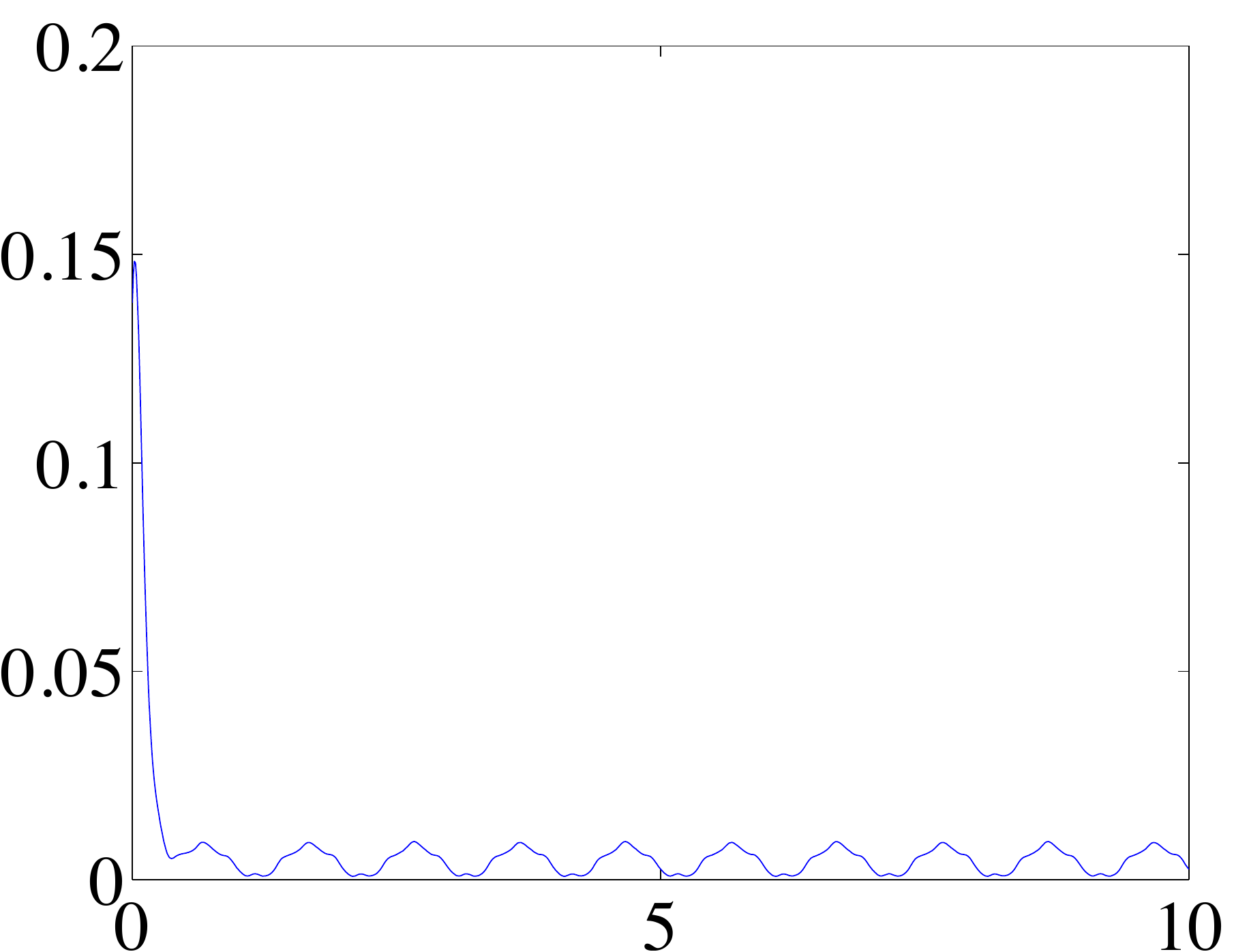}\label{fig:IbPsi}}
		\hfill
	\subfigure[Position error $e_x$ ($\mathrm{m}$)]{
		\includegraphics[width=0.485\columnwidth]{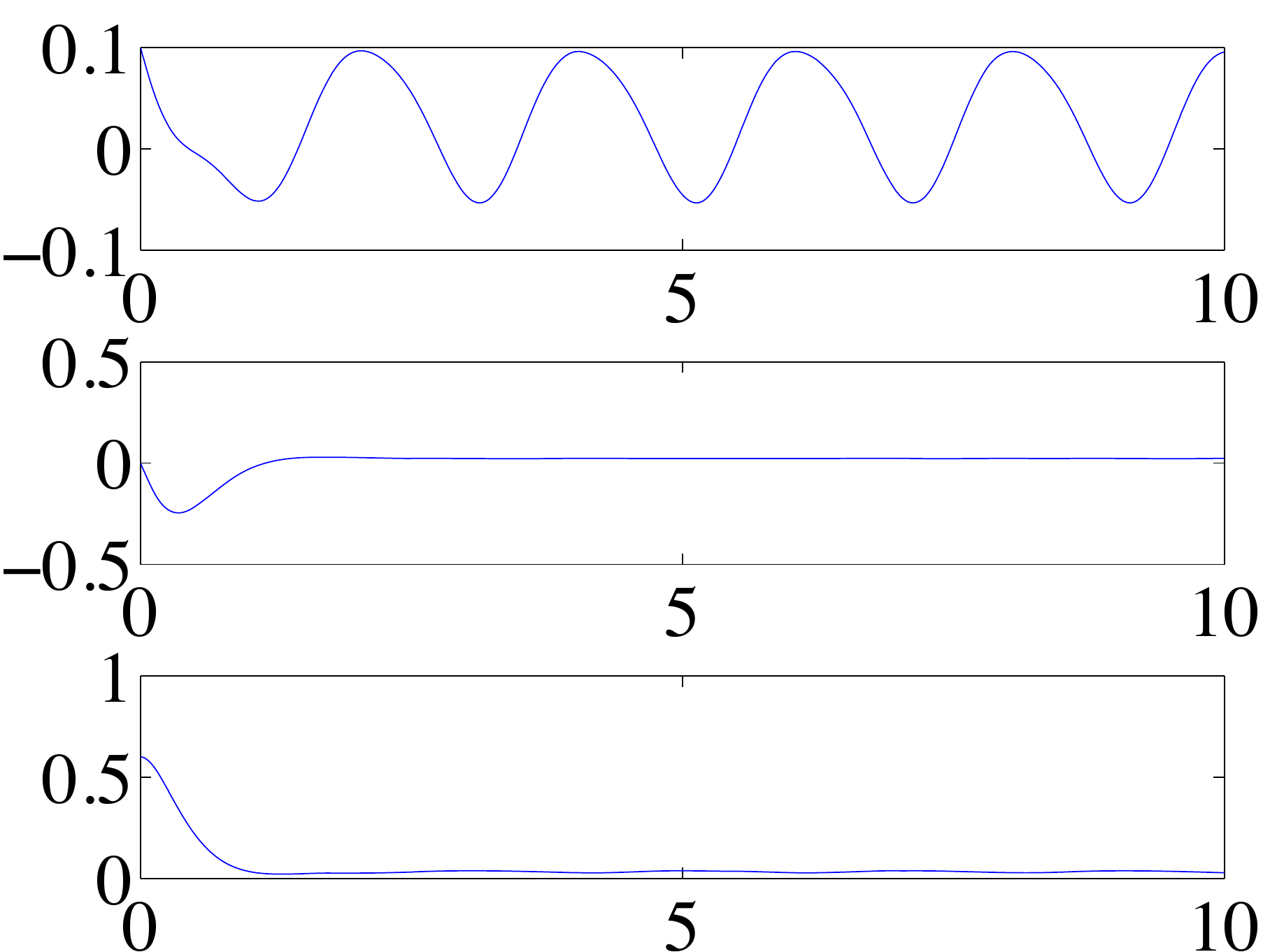}\label{fig:Ibx}}
}
\centerline{
	\subfigure[Angular velocity error $e_\Omega$ ($\mathrm{rad/sec}$)]{
		\includegraphics[width=0.48\columnwidth]{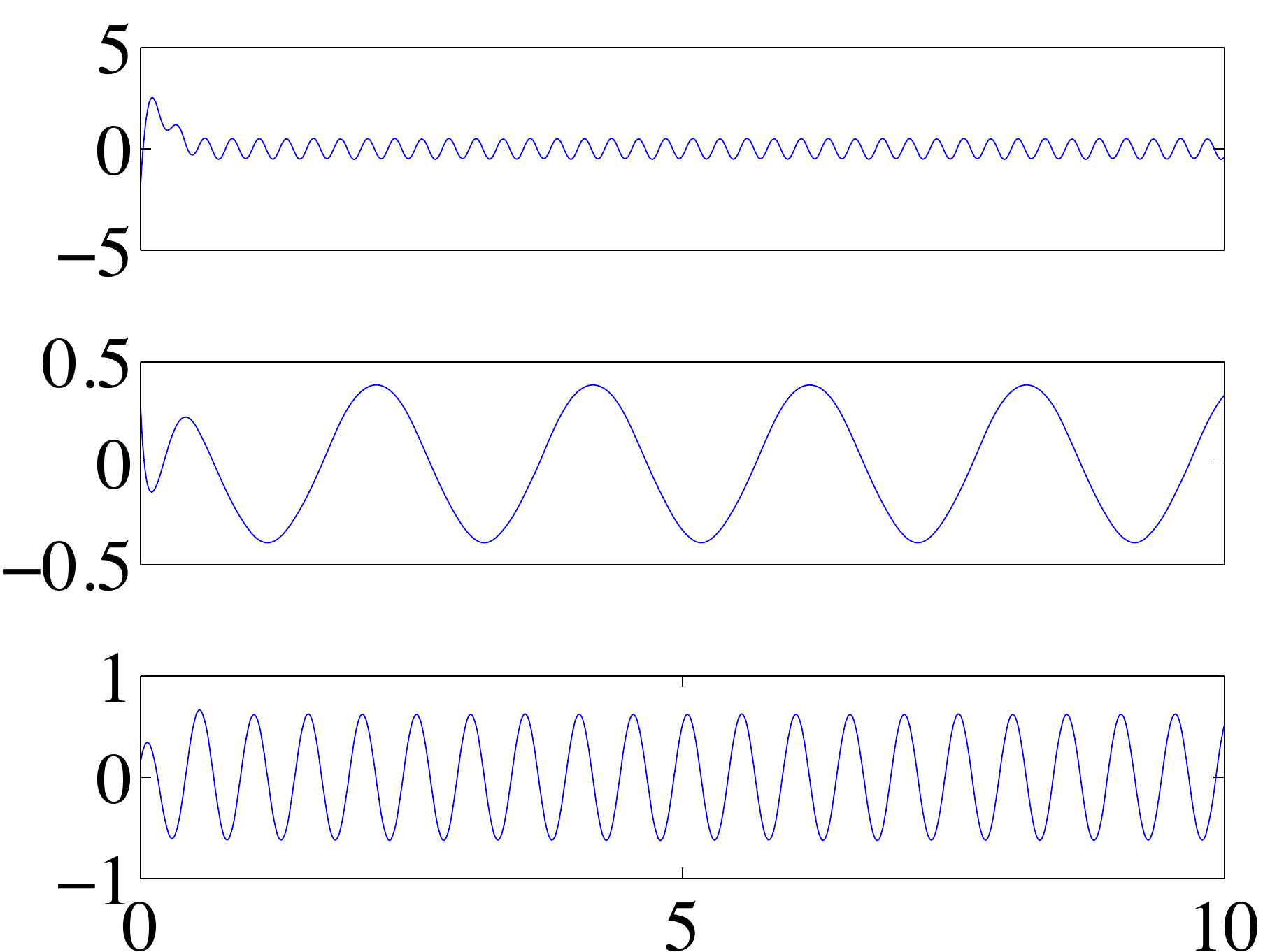}\label{fig:IbW}}
		\hfill
	\subfigure[Thrust of each rotor ($\mathrm{N}$)]{
		\includegraphics[width=0.50\columnwidth]{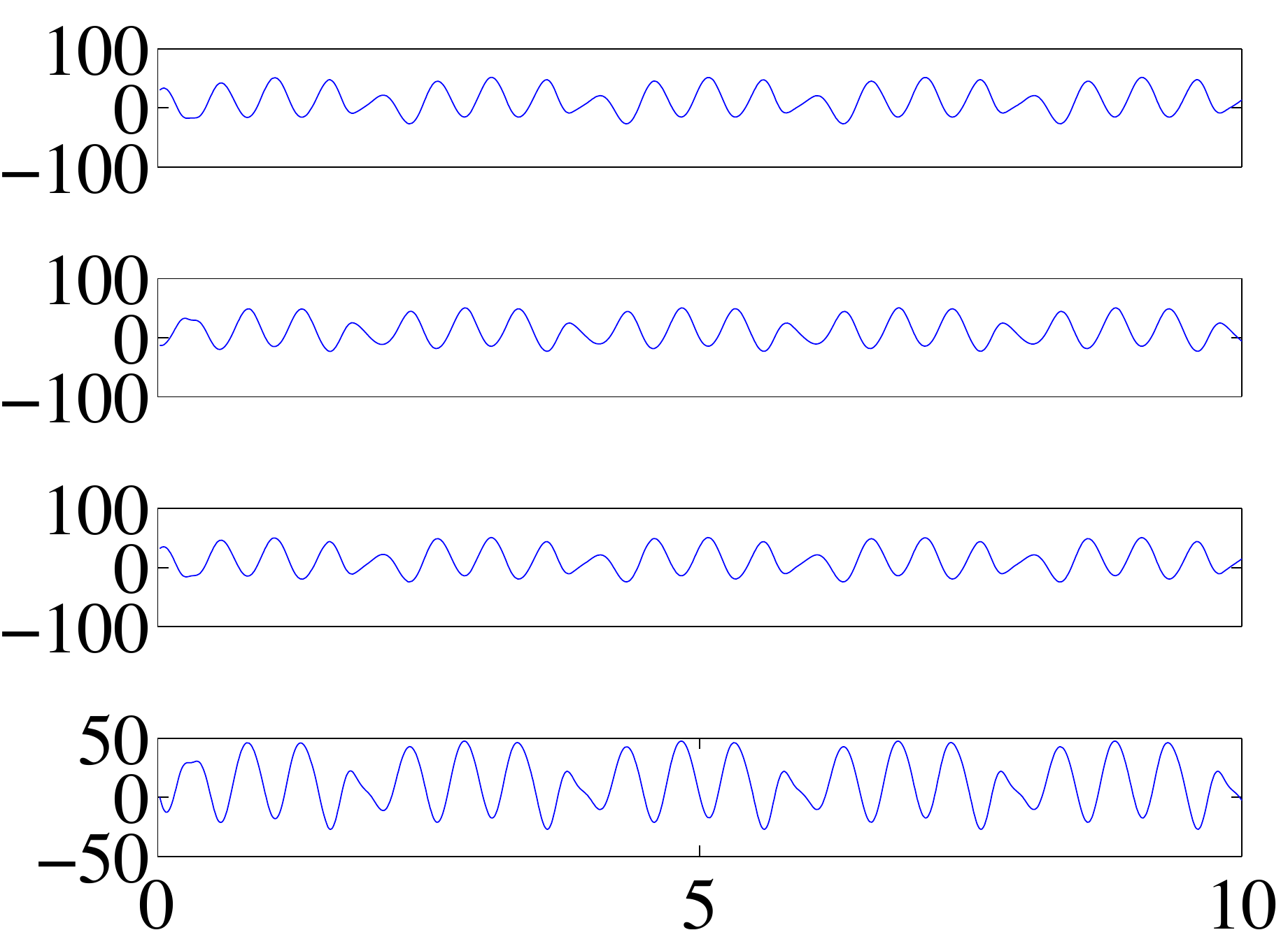}}
}
\caption{Case I: position controlled flight mode to follow an elliptic helix. The robust control input terms are set to zero, i.e. $\mu_x=\mu_R=0$, for comparison with \reffig{I}}\label{fig:Ib}
\end{figure}

\paragraph*{Case II (hovering)}

The initial conditions are given by
\begin{gather*}
x(0)=[0.1,0,0]^T\,\mathrm{m},\quad v(0)=[0,0,0]^T\,\mathrm{m/s},\\
R(0)=\exp (0.99\pi \hat e_1),\quad \Omega(0)=[0,0,0]^T\,\mathrm{rad/s},
\end{gather*}
where $e_1=[1,0,0]\in\Re^3$. The desired position command is given by
\begin{align*}
x_d(t)=[0,0,0]^T\,\mathrm{m},
\end{align*}
and the desired heading direction is fixed as $\vec b_{1_d}=[1,0,0]^T$. This describes a case that a quadrotor UAV should recover from an initially upside-down configuration.  

The initial attitude error is given by $1\leq (\Psi(0)=1.9995) < 2$, and therefore, it corresponds to Proposition \ref{prop:Pos2} that is based on both of the attitude controlled flight mode and the position controlled flight mode. 

Figure \ref{fig:II} illustrates excellent convergence properties of the proposed control system for a large initial attitude error, where the terminal position tracking error is $1.2\,\mathrm{cm}$. Figure Figure \ref{fig:IIb} shows relatively poor tracking performances with a slower convergence when there are no robust control input terms proposed in this paper. 

\begin{figure}
\centerline{
	\subfigure[Attitude error function $\Psi$]{
		\includegraphics[width=0.47\columnwidth]{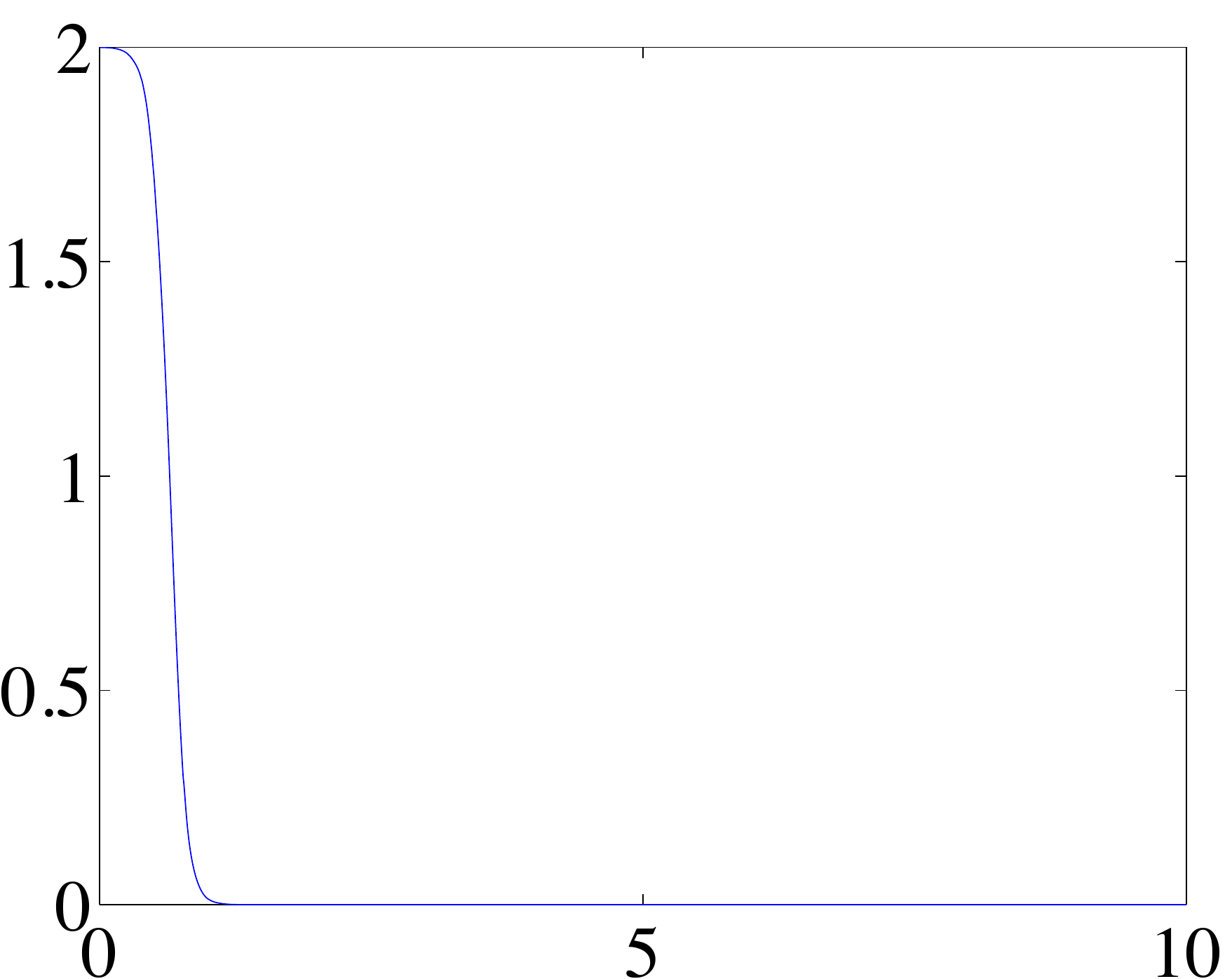}\label{fig:IPsi}}
		\hfill
	\subfigure[Position error $e_x$ ($\mathrm{m}$)]{
		\includegraphics[width=0.49\columnwidth]{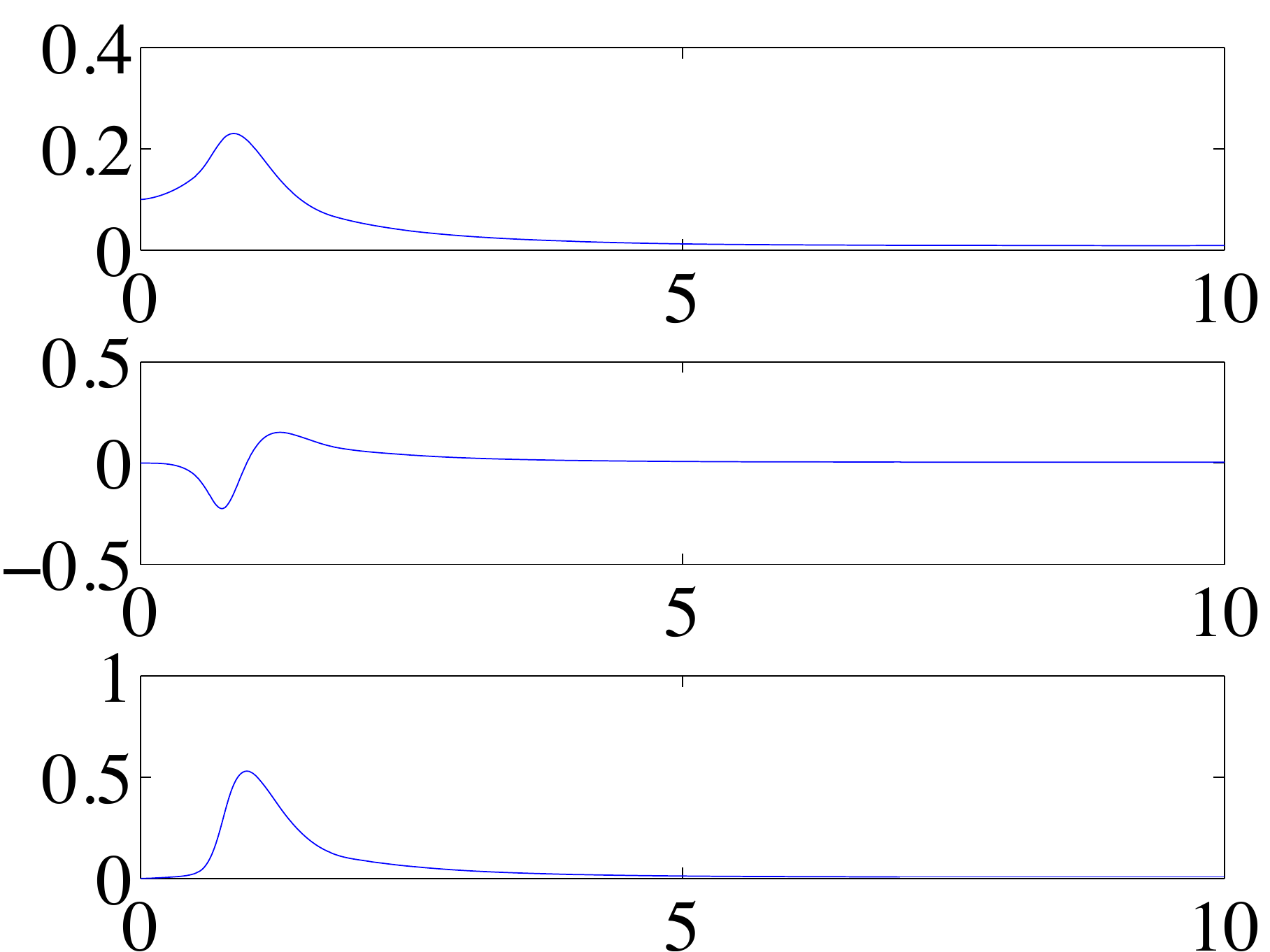}\label{fig:Ix}}
}
\centerline{
	\subfigure[Angular velocity error $e_\Omega$ ($\mathrm{rad/sec}$)]{
		\includegraphics[width=0.48\columnwidth]{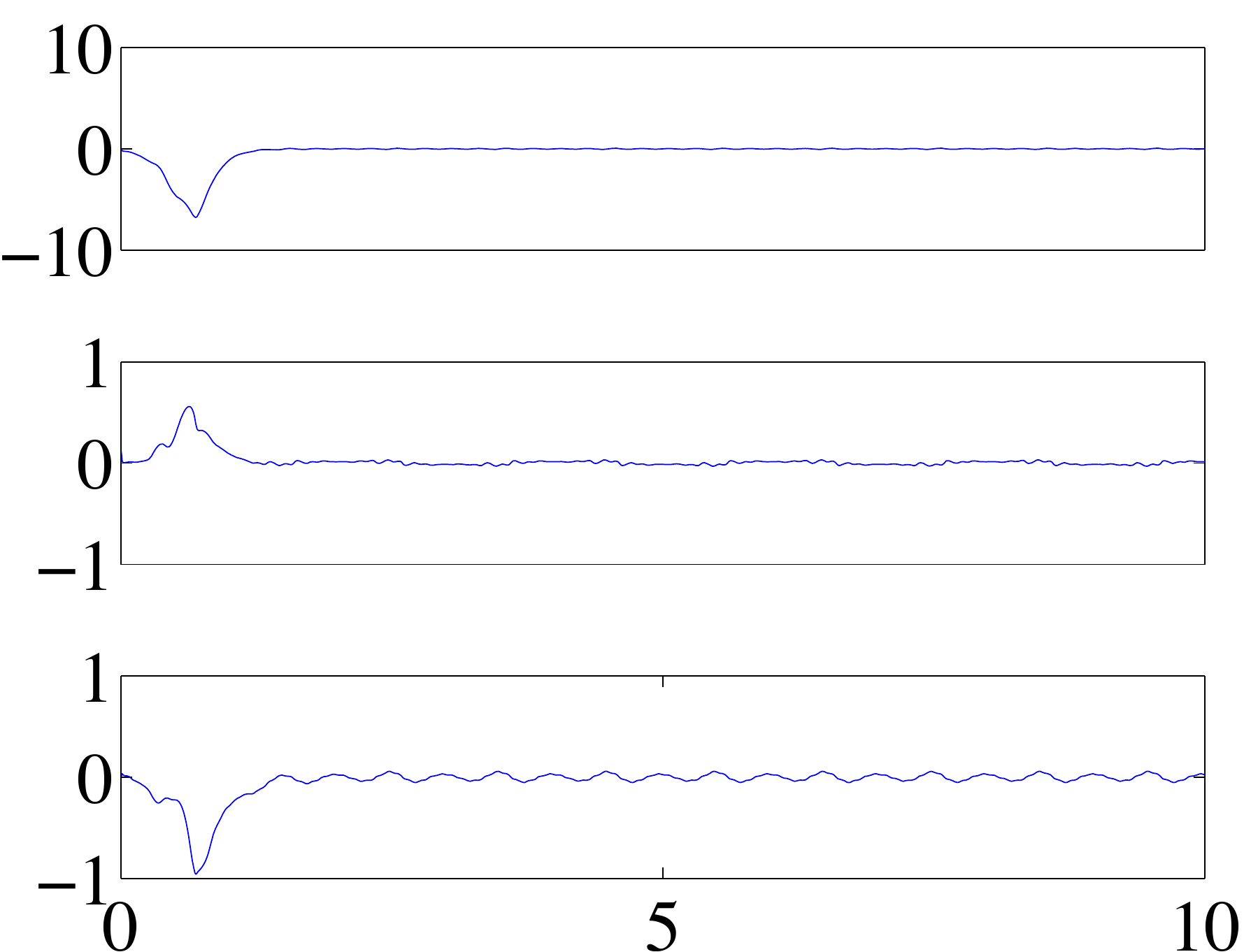}\label{fig:IW}}
		\hfill
	\subfigure[Thrust of each rotor ($\mathrm{N}$)]{
		\includegraphics[width=0.50\columnwidth]{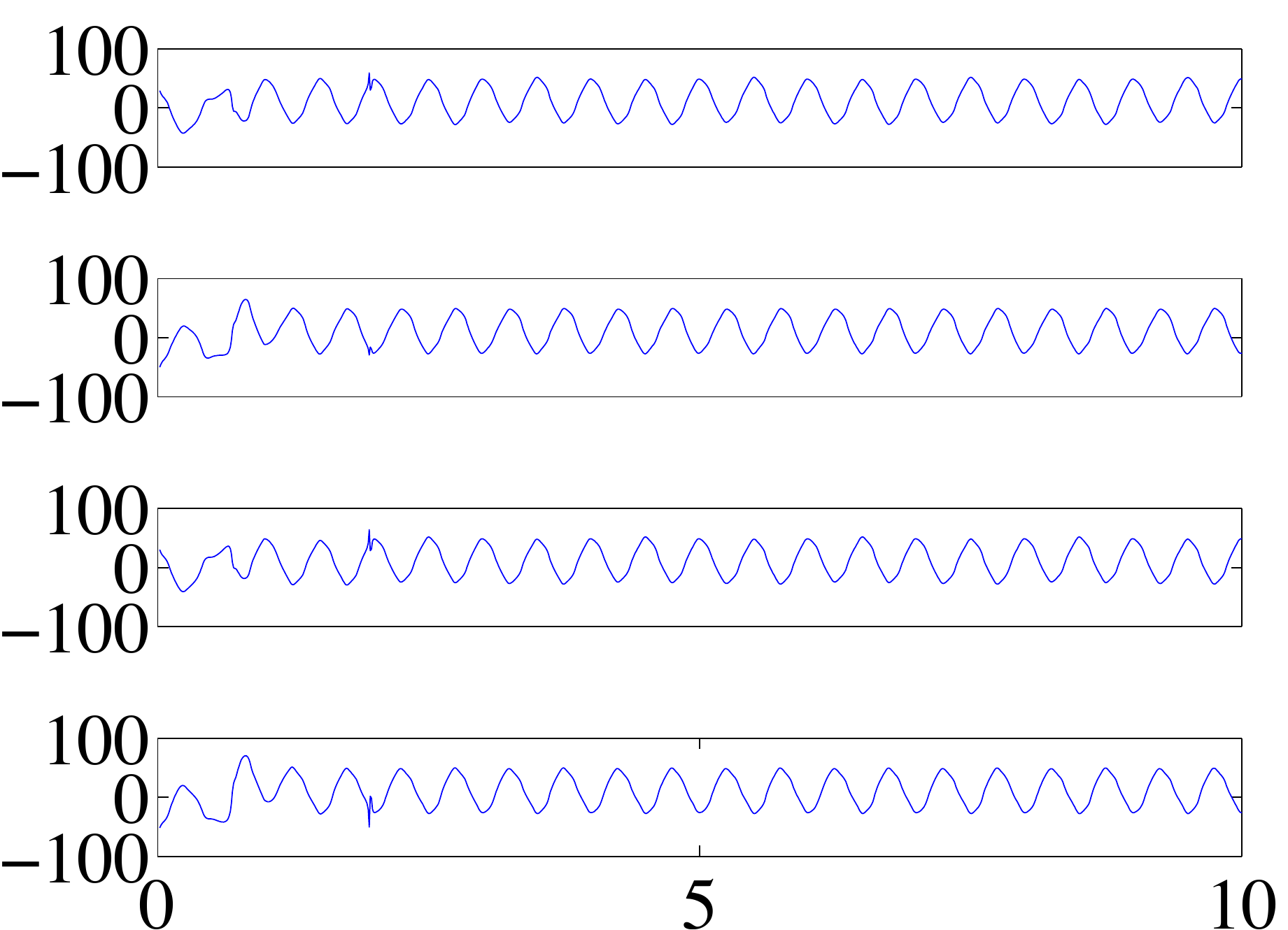}}
}
\caption{Case II: robust position controlled flight mode to recover from an initially upside-down configuration}\label{fig:II}
\end{figure}

\begin{figure}
\centerline{
	\subfigure[Attitude error function $\Psi$]{
		\includegraphics[width=0.47\columnwidth]{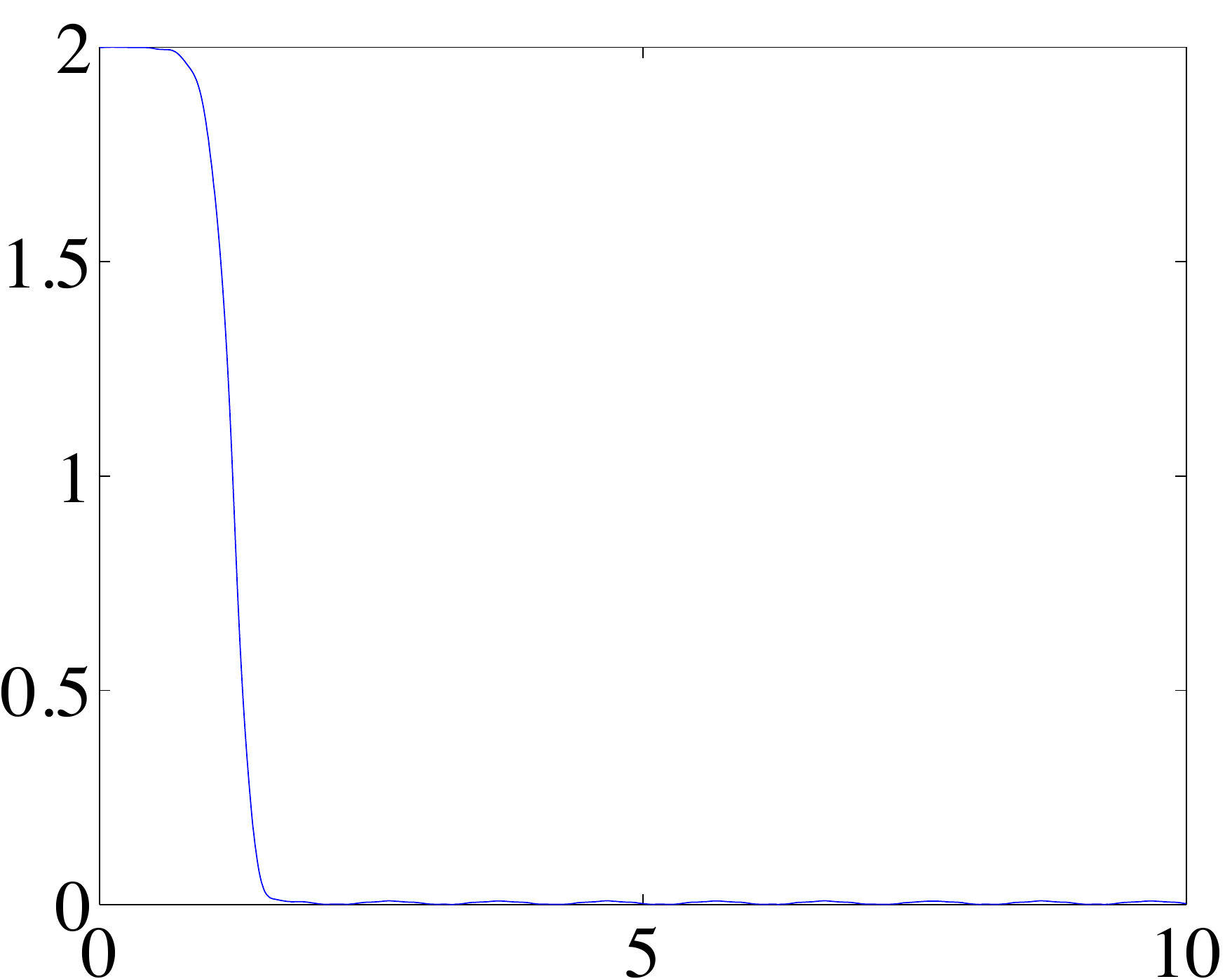}\label{fig:IbPsi}}
		\hfill
	\subfigure[Position error $e_x$ ($\mathrm{m}$)]{
		\includegraphics[width=0.49\columnwidth]{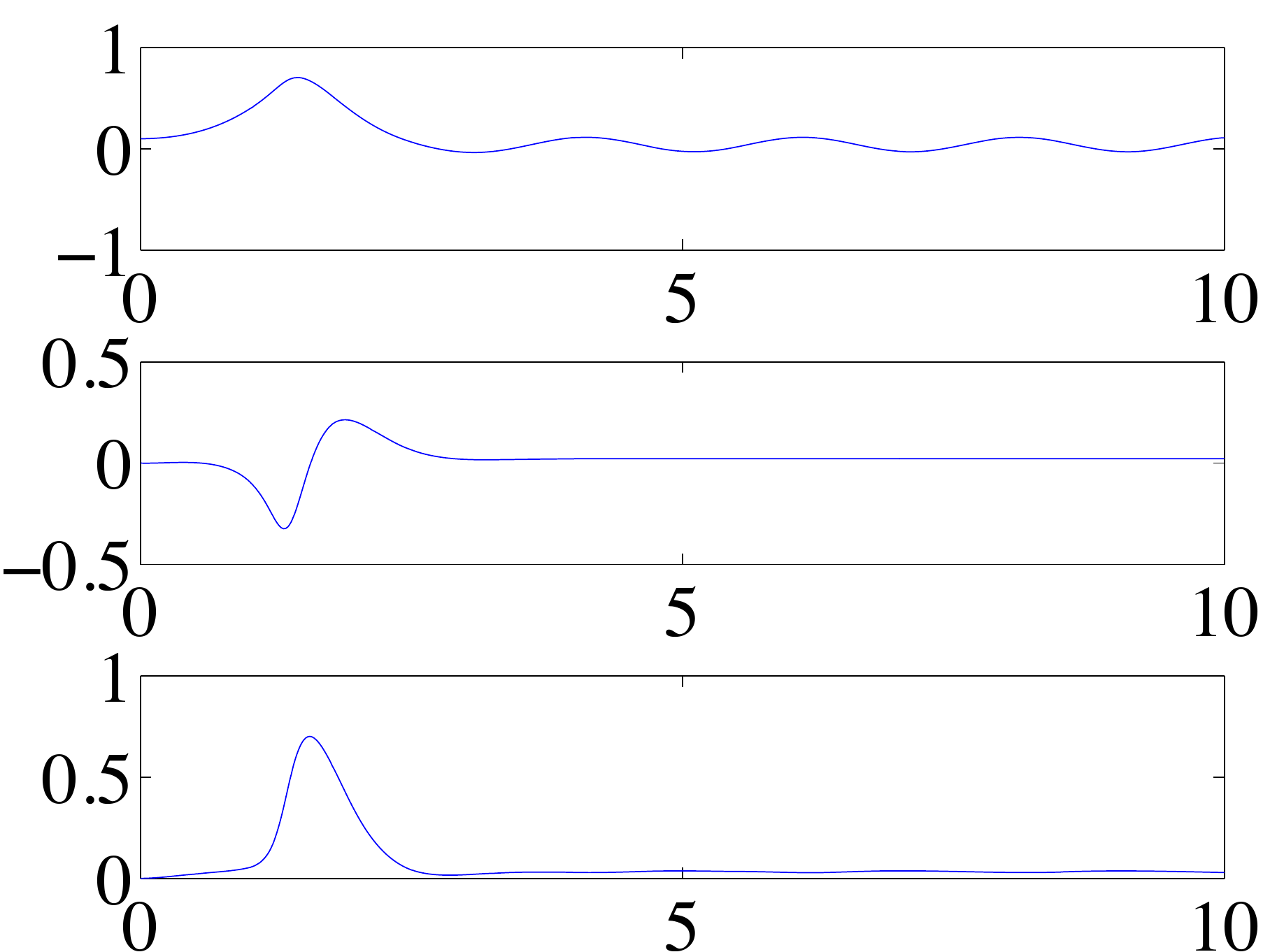}\label{fig:Ibx}}
}
\centerline{
	\subfigure[Angular velocity error $e_\Omega$ ($\mathrm{rad/sec}$)]{
		\includegraphics[width=0.48\columnwidth]{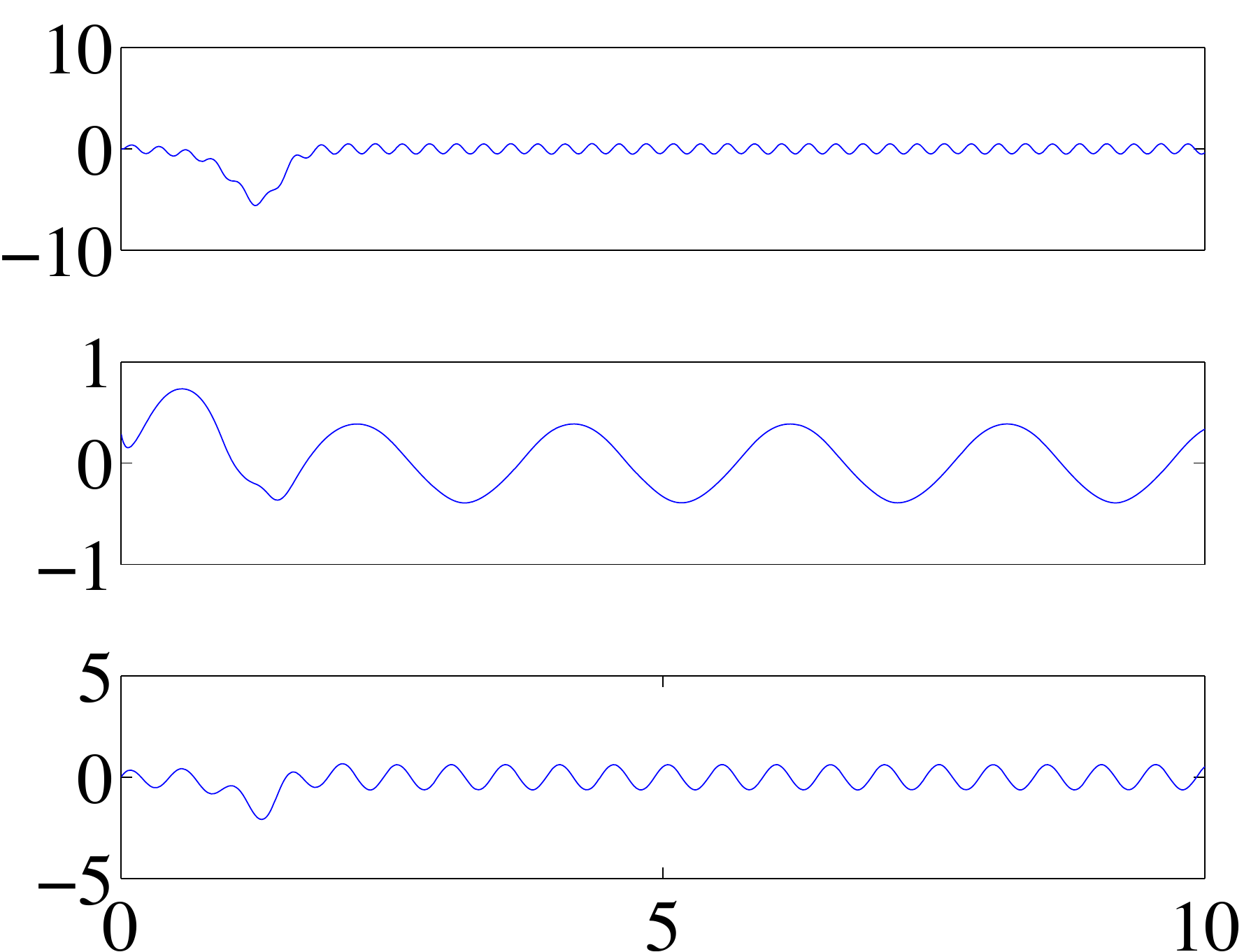}\label{fig:IbW}}
		\hfill
	\subfigure[Thrust of each rotor ($\mathrm{N}$)]{
		\includegraphics[width=0.50\columnwidth]{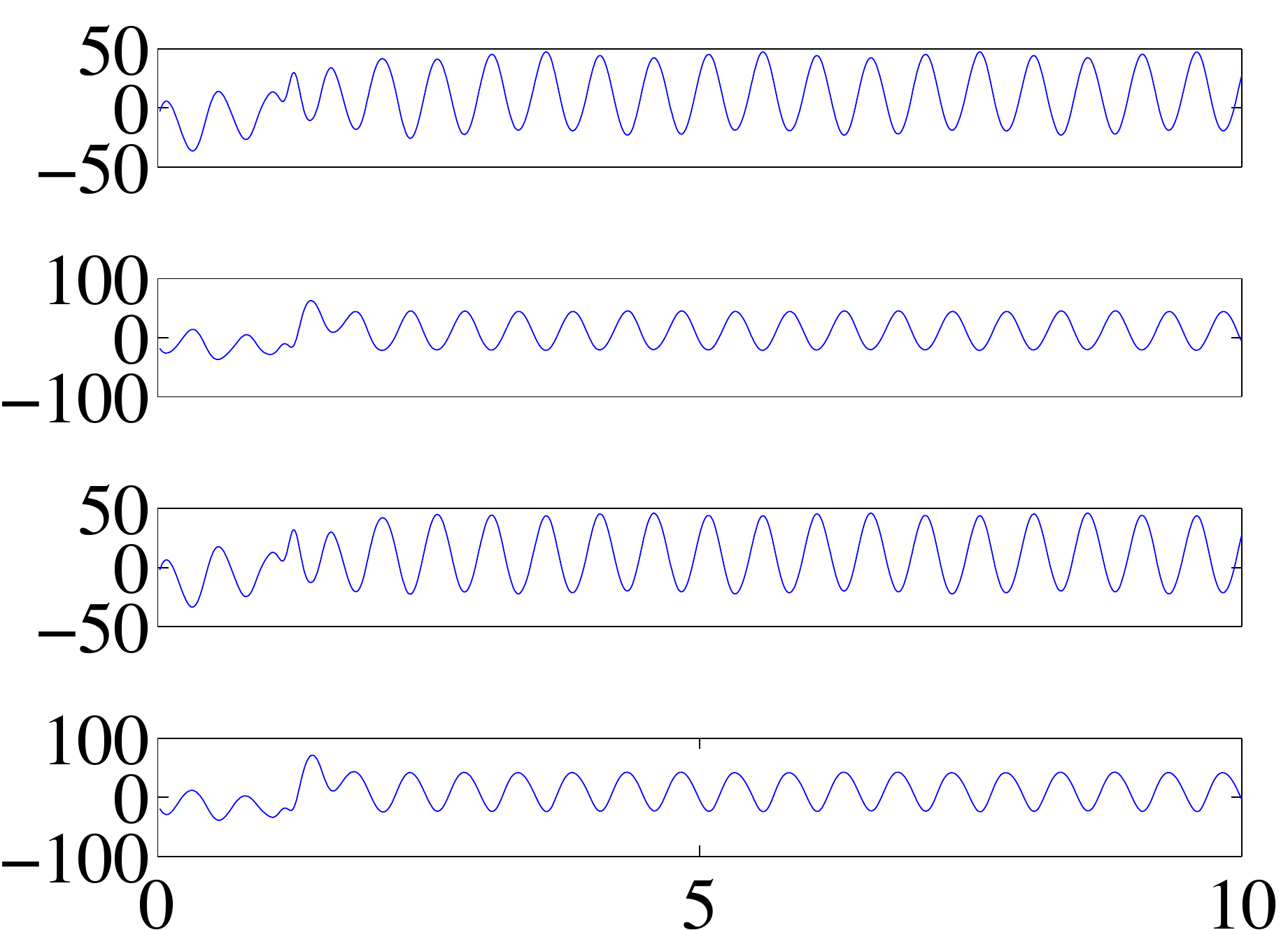}}
}
\caption{Case II: robust position controlled flight mode to recover from an initially upside-down configuration. The robust control input terms are set to zero, i.e. $\mu_x=\mu_R=0$, for comparison with \reffig{II}}\label{fig:IIb}
\end{figure}

\appendix

\section{Properties and Proofs}

\subsection{Properties of the \textit{Hat} Map}\label{app:hat}
\noindent The hat map $\hat\cdot :\Re^3\rightarrow\so$ is defined as
\begin{align}
    \hat x = \begin{bmatrix} 0 & -x_3 & x_2\\
                                x_3 & 0 & -x_1\\
                                -x_2 & x_1 & 0 \end{bmatrix}\label{eqn:hat}
\end{align}
for $x=[x_1;x_2;x_3]\in\Re^3$. This identifies the Lie algebra $\so$ with $\Re^3$ using the vector cross product in $\Re^3$. The inverse of the hat map is referred to as the \textit{vee} map, $\vee:\so\rightarrow\Re^3$. Several properties of the hat map are summarized as follows.
\begin{gather}
    \hat x y = x\times y = - y\times x = - \hat y x,\\
    -\frac{1}{2}\tr{\hat x \hat y} = x^T y,\\
    \tr{\hat x A}=\tr{A\hat x }=\frac{1}{2}\tr{\hat x (A-A^T)}=-x^T (A-A^T)^\vee,\label{eqn:hat1}\\
    \hat x  A+A^T\hat x=(\braces{\tr{A}I_{3\times 3}-A}x)^{\wedge},\label{eqn:xAAx}\\
R\hat x R^T = (Rx)^\wedge,\label{eqn:hat2}
\end{gather}
for any $x,y\in\Re^3$, $A\in\Re^{3\times 3}$, and $R\in\SO$.

\subsection{Proof of Proposition \ref{prop:Att}}\label{sec:pfAtt}

We first find the error dynamics for $e_R,e_\Omega$, and define a Lyapunov function. Then, we find conditions on control parameters to guarantee the boundedness of tracking errors.


\paragraph{Attitude Error Dynamics}

The attitude error dynamics for $\Psi,e_R,e_\Omega$ are developed in~\cite{Lee11}, and they are summarized as follows:
\begin{gather}
\frac{d}{dt}(\Psi(R,R_d))  = e_R\cdot e_\Omega,\label{eqn:Psidot}\\
\dot e_R  = E(R,R_d)e_\Omega,\label{eqn:eRdot}\\
\dot e_\Omega  = J^{-1}(-\Omega\times J\Omega + u+\Delta_R)+\hat\Omega R^T R_d\Omega_d- R^T R_d{\dot \Omega}_d,\label{eqn:eWdot00}
\end{gather}
where the matrix $E(R,R_d)\in\Re^{3\times 3}$ is given by 
\begin{align}
E(R,R_d)&=\frac{1}{2}(\trs{R^T R_d}I -R^T R_d).\label{eqn:E}
\end{align}
We can show that $\|E(R,R_d)\|\leq 1$ to obtain
\begin{align}
\norm{\dot e_R}\leq \norm{e_\Omega}.\label{eqn:neRdot}
\end{align}
Substituting the control moment \refeqn{aM} into \refeqn{eWdot00}, 
\begin{align}
J\dot e_\Omega = -k_R e_R -k_\Omega e_\Omega+\Delta_R +\mu_R.\label{eqn:eWdot}
\end{align}
In short, the attitude error dynamics are given by equations \refeqn{Psidot}, \refeqn{eRdot}, \refeqn{eWdot}, and they satisfy \refeqn{neRdot}.

\paragraph{Lyapunov Candidate}
Let a Lyapunov candidate $\mathcal{V}_2$ be 
\begin{align}
\mathcal{V}_2 = \frac{1}{2} e_\Omega \cdot J e_\Omega + k_R\, \Psi(R,R_d)+c_2 e_R\cdot e_\Omega.\label{eqn:V2}
\end{align}
We analyzes the properties of $\mathcal{V}_2$ along the solutions of the controlled system in the following domain $D_2$:
\begin{align}
D_2 = \{ (R,\Omega)\in \SO\times\Re^3\,|\, \Psi(R,R_d)<\psi_2\}.\label{eqn:D2}
\end{align}

From \refeqn{PsiLB}, \refeqn{PsiUB}, the attitude error function is bounded in $D_2$ as follows:
\begin{align}
\frac{1}{2} \norm{e_R}^2 \leq  \Psi(R,R_d) \leq \frac{1}{2-\psi_2} \norm{e_R}^2\label{eqn:eRPsi},
\end{align}
which implies that $\Psi$ is positive-definite and decrescent. It follows that the Lyapunov function $\mathcal{V}_2$ is bounded as
\begin{gather}
z_2^T M_{21} z_2 \leq \mathcal{V}_2 \leq z_2^T M_{22} z_2,
\label{eqn:V2b}
\end{gather}
where $z_2 =[\|e_R\|,\;\|e_\Omega\|]^T\in\Re^2$, and the matrices $M_{12},M_{22}$ are given by
\begin{align}
M_{21} = \frac{1}{2}\begin{bmatrix} k_R & -c_2 \\ -c_2 & \lambda_{m}(J)  \end{bmatrix},\,
M_{22} = \frac{1}{2}\begin{bmatrix} \frac{2k_R}{2-\psi_2} & c_2 \\ c_2 & \lambda_{M}(J)\end{bmatrix}.
\end{align}

From equations \refeqn{Psidot}, \refeqn{eRdot}, \refeqn{eWdot}, the time derivative of $\mathcal{V}_2$ along the solution of the controlled system is given by
\begin{align}
\dot{\mathcal{V}}_2  
& = -k_\Omega \|e_\Omega\|^2 -c_2k_R e_R\cdot J^{-1}e_R +c_2 E(R,R_d) e_\Omega \cdot e_\Omega\nonumber\\
&\quad  - c_2k_\Omega e_R\cdot J^{-1} e_\Omega+ (e_\Omega+c_2 J^{-1}e_R)\cdot(\Delta_R+\mu_R).\label{eqn:V2dot}
\end{align}
Since $\|E(R_d,R)\|\leq 1$, this is bounded by
\begin{align}
\dot{\mathcal{V}}_2   
& \leq - z_2^T W_2 z_2+ e_A \cdot (\Delta_R+\mu_R),\label{eqn:V2dot1}
\end{align}
where $e_A=e_\Omega+c_2 J^{-1}e_R\in\Re^3$ and the matrix $W_2\in\Re^{2\times 2}$ is given by
\begin{align}
W_2 = \begin{bmatrix} \frac{c_2k_R}{\lambda_{M}(J)} & -\frac{c_2k_\Omega}{2\lambda_{m}(J)} \\ 
-\frac{c_2k_\Omega}{2\lambda_{m}(J)} & k_\Omega-c_2 \end{bmatrix}.
\end{align}

Substituting \refeqn{muR}, the last term of \refeqn{V2dot1} is bounded by
\begin{align*}
e_A\cdot(\Delta_R+\mu_R) & = \delta_R\|e_A\|-\frac{\delta_R^2 \|e_A\|^2}{\delta_R \|e_A\|+\epsilon_R}\\
& = \epsilon_R\frac{\delta_R\|e_A\|}{\delta_R\|e_A\|+\epsilon_R} \leq \epsilon_R
\end{align*}
to obtain
\begin{align}
\dot{\mathcal{V}}_2 \leq - z_2^T W_2 z_2+\epsilon_R,\label{eqn:V2dot10}
\end{align}

\paragraph{Boundedness}

The condition \refeqn{c2} for the constant $c_2$ guarantees that the matrix $W_2$ in \refeqn{V2dot10} and the matrices $M_{21},M_{22}$ in \refeqn{V2b} are positive-definite. Therefore, we obtain
\begin{gather}
\lambda_{m}(M_{21})\|z_2\|^2 \leq \mathcal{V}_2 \leq \lambda_{M}(M_{22}) \|z_2\|^2,\\
\dot{\mathcal{V}}_2\leq -\lambda_{m}(W_2) \|z_2\|^2+\epsilon_R. \label{eqn:V2bb}
\end{gather}
This implies that $\dot {\mathcal{V}}_2 < 0 $ when 
\begin{align*}
\mathcal{V}_2 > \braces{\frac{\lambda_{M}(M_{22})}{\lambda_{m}(W_2)}\epsilon_R\triangleq d_1}.
\end{align*}


Consider a sub-level set of the Lyapunov function $\mathcal{V}_2$, defined as $S_\gamma = \{(R,\Omega)\in\SO\times\Re^3\,|\,\mathcal{V}_2 \leq \gamma\}$ for a positive constant $\gamma$. If $\gamma$ satisfies the following inequality
\begin{align*}
\gamma < \braces{\lambda_m(M_{21}) \psi_2(2-\psi_2)\triangleq d_2},
\end{align*}
then we can guarantee that $S_\gamma$ is a subset of the domain $D_2$ defined in \refeqn{D2}.


In short, a sub-level set of the Lyapunov function, $S_\gamma$ is a positively invariant set when $d_1<\gamma < d_2$, and any solution starting in $S_\gamma$ exponentially converges to $S_{d_1}$. To guarantee the existence of such a set, we require
\begin{align*}
\braces{d_1 = \frac{\lambda_{M}(M_{22})}{\lambda_{m}(W_2)}\epsilon_R} < \braces{\lambda_{m}(M_{21}) \psi_2(2-\psi_2)\triangleq d_2},
\end{align*}
which can be achieved by \refeqn{epsilonR}. Then, according to Theorem 5.1 in~\cite{Kha96}, the attitude tracking errors are uniformly ultimately bounded, and the corresponding ultimate bound is estimated by
\begin{align*}
S_{d_1}\subset \braces{\|z_2\|^2 \leq \frac{\lambda_{M}(M_{22})}{\lambda_{m}(M_{21})\lambda_{m}(W_2)}\epsilon_R}.
\end{align*}

\subsection{Proof of Proposition \ref{prop:Pos}}\label{sec:pfPos}
\setcounter{paragraph}{0}

We first derive the tracking error dynamics and a Lyapunov function for the translational dynamics of a quadrotor UAV, and later it is combined with the stability analyses of the rotational dynamics in Appendix \ref{sec:pfAtt} to guarantee the boundedness of tracking errors.

The subsequent analyses are developed in the domain $D_1$
\begin{align}
D_1=\{&(e_x,e_v,R,e_\Omega)\in\Re^3\times\Re^3\times \SO\times\Re^3\,|\,\nonumber\\
& \|e_x\|< e_{x_{\max}},\;\Psi< \psi_1\},\label{eqn:D}
\end{align}
Similar to \refeqn{eRPsi}, we can show that 
\begin{align}
\frac{1}{2} \norm{e_R}^2 \leq  \Psi(R,R_c) \leq \frac{1}{2-\psi_1} \norm{e_R}^2\label{eqn:eRPsi1}.
\end{align}

\paragraph{Translational Error Dynamics} The time derivative of the position error is $\dot e_x=e_v$. The time-derivative of the velocity error is given by
\begin{align}
m\dot e_v = m\ddot x -m\ddot x_d = mg e_3 - fRe_3 -m\ddot x_d+\Delta_x. \label{eqn:evdot0}
\end{align}
Consider the quantity $e_3^T R_c^T R e_3$, which represents the cosine of the angle between $b_3=Re_3$ and $b_{c_3}=R_ce_3$. Since $1-\Psi(R,R_c)$ represents the cosine of the eigen-axis rotation angle between $R_c$ and $R$, we have $1 > e_3^T R_c^T R e_3> 1-\Psi(R,R_c)>0$ in $D_1$. Therefore, the quantity $\frac{1}{e_3^T R_c^T R e_3}$ is well-defined. To rewrite the error dynamics of $e_v$ in terms of the attitude error $e_R$, we add and subtract $\frac{f}{e_3^T R_c^T R e_3}R_c e_3$ to the right hand side of \refeqn{evdot0} to obtain
\begin{align}
m\dot e_v &  = mg e_3 -m\ddot x_d- \frac{f}{e_3^T R_c^T R e_3}R_c e_3 - X+\Delta_x,\label{eqn:evdot1}
\end{align}
where $X\in\Re^3$ is defined by
\begin{align}
X=\frac{f}{e_3^T R_c^T R e_3}( (e_3^T R_c^T R e_3)R e_3 -R_ce_3).\label{eqn:X}
\end{align}
Let $A=-k_x e_x - k_v e_v -mg e_3 + m\ddot x_d+\mu_x$.
Then, from \refeqn{f}, \refeqn{Rd3}, we have $f=-A\cdot Re_3$ and ${b}_{3_c}=R_c e_3 = -A/\norm{A}$, i.e., $-A=\|A\| R_c e_3$. By combining these, we obtain $f= (\norm{A}R_c e_3)\cdot R e_3$. Therefore, the third term of the right hand side of \refeqn{evdot1} can be written as
\begin{align*}
- \frac{f}{e_3^T R_c^T R e_3}R_c e_3 & = -\frac{(\norm{A}R_c e_3)\cdot R e_3}{e_3^T R_c^T R e_3}\cdot - \frac{A}{\norm{A}}=A\\
& =-k_x e_x - k_v e_v -mg e_3 + m\ddot x_d+\mu_x.
\end{align*}
Substituting this into \refeqn{evdot1}, the error dynamics of $e_v$ can be written as
\begin{align}
m\dot e_v & =  -k_x e_x - k_v e_v - X+\Delta_x+\mu_x.\label{eqn:evdot}
\end{align}

\paragraph{Lyapunov Candidate for Translation Dynamics}
Let a Lyapunov candidate $\mathcal{V}_1$ be
\begin{align}
\mathcal{V}_1 = \frac{1}{2}k_x\|e_x\|^2  + \frac{1}{2} m \|e_v\|^2 + c_1 e_x\cdot e_v\label{eqn:V1}.
\end{align}
The derivative of ${\mathcal{V}}_1$ along the solution of \refeqn{evdot} is given by
\begin{align}
\dot{\mathcal{V}}_1 
& =  -(k_v-c_1) \|e_v\|^2 
- \frac{c_1 k_x}{m} \|e_x\|^2 
-\frac{c_1 k_v}{m} e_x\cdot e_v\nonumber\\
&\quad+\braces{X+\Delta_x+\mu_x}\cdot \braces{ \frac{c_1}{m} e_x + e_v}.\label{eqn:V1dot0}
\end{align}

From \refeqn{mux}, \refeqn{eB}, the last part of \refeqn{V1dot0} is bounded by
\begin{align}
e_B\cdot(\Delta_x + \mu_x ) & \leq \delta_x \|e_B\| -\frac{\delta_x^{\tau+2} \|e_B\|^{\tau+2}}{\delta_x^{\tau+1}\|e_B\|^{\tau+1}+\epsilon_x^{\tau+1}}\nonumber\\
& = \frac{\delta_x \|e_B\|\epsilon_x^{\tau+1}}{\delta_x^{\tau+1}\|e_B\|^{\tau+1}+\epsilon_x^{\tau+1}}\leq \epsilon_x.\label{eqn:eBD_bound}
\end{align}
The last inequality is satisfied, since if $\delta_x\|e_B\|\leq \epsilon_x$
\begin{align*}
\delta_x \|e_B\|\frac{\epsilon_x^{\tau+1}}{\delta_x^{\tau+1}\|e_B\|^{\tau+1}+\epsilon_x^{\tau+1}}\leq \delta_x\|e_B\| \leq \epsilon_x,
\end{align*}
and if $\delta_x\|e_B\| > \epsilon_x$
\begin{align*}
\frac{\delta_x^{\tau+1} \|e_B\|^{\tau+1}}{\delta_x^{\tau+1}\|e_B\|^{\tau+1}+\epsilon_x^{\tau+1}}
\frac{\epsilon_x^{\tau+1}}{(\delta_x \|e_B\|)^\tau}
\leq \parenth{\frac{\epsilon_x}{\delta_x \|e_B\|}}^\tau \epsilon_x \leq \epsilon_x.
\end{align*}

Now we find a bound of $X$ given by \refeqn{X}. Since $f=\|A\| (e_3^T R_c^T R e_3)$, we have
\begin{align*}
\norm{X} & \leq \|A\|\,\| (e_3^T R_c^T R e_3)R e_3 -R_ce_3\|\\
& \leq( k_x \|e_x\| + k_v \|e_v\| + B+\delta_x)\, \| (e_3^T R_c^T R e_3)R e_3 -R_ce_3\|.
\end{align*}
The last term $\| (e_3^T R_c^T R e_3)R e_3 -R_ce_3\|$ represents the sine of the angle between $b_3=Re_3$ and $b_{c_3}=R_c e_3$, since
\begin{align*}
(b_{3_c}\cdot b_3)b_3 - b_{3_c} = b_{3}\times (b_3\times b_{3_c}).
\end{align*}
The magnitude of the attitude error vector, $\|e_R\|$ represents the sine of the eigen-axis rotation angle between $R_c$ and $R$ (see \cite{LeeLeo}). Therefore, we have $\| (e_3^T R_c^T R e_3)R e_3 -R_ce_3\| \leq \| e_R\|$. It follows that 
\begin{align}
\| (e_3^T R_d^T R e_3)R e_3 -R_de_3\| &\leq \| e_R\| = \sqrt{\Psi(2-\Psi)}\nonumber\\
& \leq \braces{\sqrt{\psi_1 (2-\psi_1)}\triangleq\alpha}  <1.\label{eqn:eR_bound}
\end{align}
Therefore, $X$ is bounded by
\begin{align}
\norm{X} 
&\leq ( k_x \|e_x\| + k_v \|e_v\| + B+\delta_x) \|e_R\| \nonumber\\
&\leq ( k_x \|e_x\| + k_v \|e_v\| + B+\delta_x) \alpha.\label{eqn:XB}
\end{align}
Substituting \refeqn{eBD_bound}, \refeqn{XB} into \refeqn{V1dot0}, 
\begin{align}
\dot{\mathcal{V}}_1 
& \leq   -(k_v(1-\alpha)-c_1) \|e_v\|^2 
- \frac{c_1 k_x}{m}(1-\alpha) \|e_x\|^2 \nonumber\\
&\quad + \frac{c_1k_v}{m}(1+\alpha) \|e_x\|\|e_v\|\nonumber\\
&\quad +  \|e_R\| \braces{(B+\delta_x)(\frac{c_1}{m} \|e_x\| + \|e_v\|)+k_x\|e_x\|\|e_v\|}\nonumber\\
&\quad +\epsilon_x.\label{eqn:V1dot1}
\end{align}

\EditTL{In the above expression for $\dot{\mathcal{V}}_1$, there is a third-order error term, namely $k_x\|e_R\|\|e_x\|\|e_v\|$. Using \refeqn{eR_bound}, it is possible to choose its upper bound as $k_x\alpha\|e_x\|\|e_v\|$ similar to other terms, but the corresponding stability analysis becomes complicated, and the initial attitude error should be reduced further. Instead, we restrict our analysis to the domain $D_1$ defined in \refeqn{D}, and its upper bound is chosen as $k_xe_{x_{\max}}\|e_R\|\|e_v\|$.}

%
%

\paragraph{Lyapunov Candidate for the Complete System:}
Let $\mathcal{V}=\mathcal{V}_1+\mathcal{V}_2$ be the Lyapunov candidate of the complete system.
\begin{align}
\mathcal{V} & = \frac{1}{2} k_x \|e_x\|^2 + \frac{1}{2}m \|e_v\|^2 + c_1 e_x\cdot e_v\nonumber\\
&\quad + \frac{1}{2}e_\Omega \cdot Je_  \Omega + k_R\Psi(R,R_d) + c_2 e_R\cdot e_\Omega.\label{eqn:V}
\end{align}
Using \refeqn{eRPsi1}, the bound of the  Lyapunov candidate $\mathcal{V}$ can be written as
\begin{align}
z_1^T M_{11} z_1 + z_2^T M_{21} z_2 \leq \mathcal{V} \leq z_1^T M_{12} z_1 + z_2^T M'_{22} z_2,\label{eqn:Vb}
\end{align}
where $z_1=[\|e_x\|,\;\|e_v\|]^T$, $z_2=[\|e_R\|,\;\|e_\Omega\|]^T\in\Re^2$, and the matrices $M_{11},M_{12},M_{21},M_{22}$ are given by
\begin{gather*}
M_{11} = \frac{1}{2}\begin{bmatrix} k_x & -c_1 \\ -c_1 & m\end{bmatrix},\quad
M_{12} = \frac{1}{2}\begin{bmatrix} k_x & c_1 \\ c_1 & m\end{bmatrix},\\
M_{21} = \frac{1}{2}\begin{bmatrix} k_R & -c_2 \\ -c_2 & \lambda_{m}(J)  \end{bmatrix},\quad
M'_{22} = \frac{1}{2}\begin{bmatrix} \frac{2k_R}{2-\psi_1} & c_2 \\ c_2 & \lambda_{M}(J)\end{bmatrix}.
\end{gather*}

Using \refeqn{V2dot10} and \refeqn{V1dot1}, the time-derivative of $\mathcal{V}$ is given by
\begin{align}
\dot{\mathcal{V}} \leq -z_1^T W_1 z_1  + z_1^T W_{12} z_2 - z_2^T W_2 z_2+\epsilon_x+\epsilon_R, \label{eqn:Vdotb}
\end{align}
where $W_1,W_{12},W_2\in\Re^{2\times 2}$ are defined as follows:
\begin{align}
W_1 &= \begin{bmatrix} \frac{c_1k_x}{m}(1-\alpha) & -\frac{c_1k_v}{2m}(1+\alpha)\\
-\frac{c_1k_v}{2m}(1+\alpha) & k_v(1-\alpha)-c_1\end{bmatrix},\\
W_{12}&=\begin{bmatrix}
\frac{c_1}{m}(B+\delta_x) & 0 \\ B+\delta_x+k_xe_{x_{\max}} & 0\end{bmatrix},\\
W_2 &= \begin{bmatrix} \frac{c_2k_R}{\lambda_{M}(J)} & -\frac{c_2k_\Omega}{2\lambda_{m}(J)} \\ 
-\frac{c_2k_\Omega}{2\lambda_{m}(J)} & k_\Omega-c_2 \end{bmatrix}.
\end{align}

\paragraph{Boundedness} 
Under the given conditions \refeqn{c1b}, \refeqn{c2b}, all of the matrices $M_{11}$, $M_{12}$, $M_{21}$, $M'_{22}$, $W_{1}$, and $W_2$ are positive-definite. Therefore, the Lyapunov function $\mathcal{V}$ is positive-definite and decrescent to obtain
\begin{align}
\min\{ \lambda_m(M_{11}),& \lambda_m(M_{21})\} \|z\|^2 \leq\mathcal{V}\nonumber\\
& \leq \max\{ \lambda_M(M_{12}),\lambda_M(M_{22}')\}\|z\|^2,
\end{align}
where $z=[\|z_1\|,\,\|z_2\|]^T\in\Re^2$, and the time-derivative of $\mathcal{V}$ is bounded by
\begin{align}
\dot{\mathcal{V}} &\leq -\lambda_{m}(W_1)\|z_1\|^2 +\|W_{12}\|_2 \|z_1\|\|z_2\| - \lambda_{m} (W_{2})\|z_2\|^2\nonumber\\
&\quad +\epsilon_x+\epsilon_R\nonumber\\
& = -z^T W z + \epsilon_x+\epsilon_R\nonumber\\
& \leq -\lambda_m(W) \|z\|^2 + \epsilon_x+\epsilon_R.
\end{align}
where the matrix $W\in\Re^{2\times 2}$ is given by
\begin{align}
W=\begin{bmatrix}
\lambda_{m}(W_1) & -\frac{1}{2}\|W_{12}\|_2\\
-\frac{1}{2}\|W_{12}\|_2 & \lambda_m(W_2)
\end{bmatrix}.
\end{align}

Similar to the proof of Proposition \ref{prop:Att}, we can show that the tracking errors are uniformly ultimately bounded if the constants $\epsilon_x,\epsilon_R$ are sufficiently small, as given in \refeqn{epsilon_bound}, and the corresponding ultimate bound is given by \refeqn{uubPos}.

\subsection{Proof of Proposition \ref{prop:Pos2}}\label{sec:pfPos2}
The given assumptions satisfy the assumption of Proposition \ref{prop:Att}, from which the tracking error $z_2=[\|e_R\|,\|e_\Omega\|]^T$ is guaranteed to exponentially decrease until it satisfies the bound given by \refeqn{uubA}. But, \refeqn{epsilonR_bound_Pos2} guarantees that the attitude error enters the region defined by \refeqn{Psi0} in a finite time $t^*$. 

Therefore, if we show that the tracking error $z_1=[\|e_x\|,\|e_v\|]^T$ is bounded in $t\in[0, t^*]$ as well, then the complete tracking error $(z_1,z_2)$ is uniformly ultimately bounded.

The boundedness of $z_1$ is shown as follows. The error dynamics or $e_v$ can be written as
\begin{align*}
m\dot e_v = mg e_3 -fRe_3 -m\ddot x_d+\Delta_x.
\end{align*}
Let ${\mathcal{V}}_3$ be a positive-definite function of $\|e_x\|$ and $\|e_v\|$:
\begin{align*}
{\mathcal{V}}_3 = \frac{1}{2}\|e_x\|^2  + \frac{1}{2}m \|e_v\|^2.
\end{align*}
Then, we have $\|e_x\|\leq \sqrt{2\mathcal{V}_3}$, $\|e_v\|\leq \sqrt{\frac{2}{m}\mathcal{V}_3}$. The time-derivative of $\mathcal{V}_3$ is given by
\begin{align*}
\dot{\mathcal{V}}_3 & =  e_x\cdot e_v + e_v\cdot (mg e_3 -f R e_3- m\ddot x_d+\Delta_x) \\
& \leq \|e_x\|\|e_v\| + \|e_v\| (B+\delta_x) + \|e_v\| \|R e_3\| |f|.
\end{align*}
From \refeqn{f}, we obtain
\begin{align*}
\dot{\mathcal{V}}_3 & \leq \|e_x\|\|e_v\| + \|e_v\| (B+\delta_x)\\
&\quad + \|e_v\| (k_x\|e_x\|+k_v\|e_v\| + B+\delta_x)\\
& = k_v \|e_v\|^2 + (2(B+\delta_x)+(k_x+1)\|e_x\|)\|e_v\|\\
& \leq d_1 \mathcal{V}_3 + d_2 \sqrt{\mathcal{V}_3},
\end{align*}
where $d_1 = k_v \frac{2}{m} +2(k_x+1)\frac{1}{\sqrt{m}}$, $d_2 = 2(B+\delta_x) \sqrt{\frac{2}{m}}$. Suppose that $\mathcal{V}_3 \geq 1$ for a time interval $[t_a,t_b]\subset [0,t^*]$. In this time interval, we have $\sqrt{\mathcal{V}_3} \leq \mathcal{V}_3$. Therefore, 
\begin{align*}
\dot{\mathcal{V}}_3 \leq (d_1+d_2) \mathcal{V}_3 \quad \Rightarrow\quad \mathcal{V}_3(t) \leq \mathcal{V}_3(t_a) e^{(d_1+d_2)(t-t_a)}.
\end{align*}
Therefore, for any time interval in which $\mathcal{V}_3\geq 1$, $\mathcal{V}_3$ is bounded. This implies that $\mathcal{V}_3$, and therefore $z_1=[\|e_x\|,\|e_v\|]^T$, are bounded for $0\leq t\leq t^*$.

\bibliography{ACC12}
\bibliographystyle{IEEEtran}

\end{document}